\numberwithin{equation}{section}
\newcolumntype{P}[1]{>{\centering\arraybackslash}p{#1}}
\newtheorem{thm}{Theorem}[section]
\newtheorem{lem}[thm]{Lemma}
\newtheorem{prop}[thm]{Proposition}
\newtheorem{defn}[thm]{Definition}
\theoremstyle{definition}
\newtheorem{rem}[thm]{Remark}
\theoremstyle{remark}
\theoremstyle{definition}
\newtheorem*{nota}{Notation}
\newcommand{\abs}[1]{\left\vert#1\right\vert}
\newcommand{\argmin}{\arg\min}
\newcommand{\OM}{\mathcal{M}}
\newcommand{\OMtilde}{\widetilde{\mathcal{M}}}
\patchcmd{\abstract}{\scshape\abstractname}{\textbf{\abstractname}}{}{}
\def\@makefnmark{} %note a di pagina senza numero 2
\mathchardef\mhyphen="2D
\begin{document}
\title{Imaging of nonlinear materials via the Monotonicity Principle}
\author[V.Mottola, A. Corbo Esposito, G. Piscitelli, A. Tamburrino]{
Vincenzo Mottola$^1$, Antonio Corbo Esposito$^1$, Gianpaolo Piscitelli$^2$, Antonello Tamburrino$^{1,3}$}\footnote{\\$^1$Dipartimento di Ingegneria Elettrica e dell'Informazione \lq\lq M. Scarano\rq\rq, Universit\`a degli Studi di Cassino e del Lazio Meridionale, Via G. Di Biasio n. 43, 03043 Cassino (FR), Italy.\\
$^2$ Dipartimento di Scienze Economiche, Giuridiche, Informatiche e Motorie, Universit\'a degli Studi di Napoli Parthenope, 80035 Nola (NA), Italy. \\
$^3$Department of Electrical and Computer Engineering, Michigan State University, East Lansing, MI-48824, USA.\\
Email: vincenzo.mottola@unicas.it {\it (corresponding author)}, antonio.corboesposito@unicas.it,  gianpaolo.piscitelli@uniparthenope.it, antonello.tamburrino@unicas.it.}
\maketitle
\markright{IMAGING OF NONLINEAR MATERIALS VIA MONOTONICITY PRINCIPLE}

\begin{abstract} Inverse problems, which are related to Maxwell's equations, in the presence of nonlinear materials is a quite new topic in the literature. The lack of contributions in this area can be ascribed to the significant challenges that such problems pose. Retrieving the spatial behaviour of some unknown physical property, from boundary measurements, is a nonlinear and highly ill-posed problem even in the presence of linear materials. Furthermore, this complexity grows exponentially \textcolor{black}{in the presence of nonlinear materials}.

\textcolor{black}{In the tomography of linear materials, the Monotonicity Principle (MP) is the foundation of a class of non-iterative algorithms able to guarantee excellent performances and compatibility with real-time applications. Recently, the MP} has been extended to nonlinear materials under very general assumptions. Starting from the theoretical background for this extension, we develop a first real-time inversion method for the inverse obstacle problem in the presence of nonlinear materials. 

\textcolor{black}{The proposed method is intendend for all problems governed by the quasilinear Laplace equation, i.e. static problems involving nonlinear materials.}

In this paper, we provide some preliminary results which give the foundation of our method and some extended numerical examples. 

\noindent \textsc{\bf Keywords}: Monotonicity Principle, Noniterative Algorithms, Magnetostatic Permeability Tomography, Inverse Problems, Nonlinear materials.

%\noindent\textsc{\bf MSC 2010}: 35R30, 35J60.\\
\end{abstract}

\section{Introduction} 
\label{sec1}
\textcolor{black}{This paper, proposes a real-time inversion method for the nonlinear Calder\'on problem~\cite{calderon1980inverse}. Specifically, the aim is to retrieve the spatial behaviour of the unknown coefficient $\gamma$ appearing in the following quasilinear elliptic partial differential equation}
\begin{equation}\label{eqn:dirint}
\textcolor{black}{
    \begin{cases}
        \nabla\cdot\left(\gamma(x,\abs{\nabla u})\nabla u\right)=0 & \text{in $\Omega$} \\
        u=f(x) & \text{on $\partial\Omega$}.
    \end{cases}
    }
\end{equation}
\textcolor{black}{In problem \eqref{eqn:dirint}, the equations are in the weak form, and $\Omega \subset \mathbb{R}^n$ ($n \ge 2$), the region under tomographic inspection, is a an open bounded connected domain with a Lipschitz boundary. The boundary data $f$ belongs to a suitable trace space $X_{\diamond}(\partial\Omega)$, defined in the following. The existence and uniqueness of the solution follow from assumptions discussed in Section~\ref{sec2}.}

\textcolor{black}{More precisely, the aim is to retrieve the spatial behaviour of the (nonlinear) unknown coefficient $\gamma_A$, starting from knowledge of proper boundary data, i.e. starting from knowledge of the Dirichlet-to-Neumann (DtN) operator}
\begin{equation*}
    \textcolor{black}{
    \Lambda:f\in X_{\diamond}(\partial\Omega)\to \gamma\partial_n u|_{\partial\Omega}\in X'_{\diamond}(\partial\Omega).
    }
\end{equation*}
\textcolor{black}{The targeted problem is the inverse obstacle problem, where the goal is to reconstruct the shape, position and dimension of one or more anomalies embedded in a known background and occupying region $A$. In other words, the function $\gamma$ specializes as $\gamma_A$, defined as}
\begin{equation}\label{eqn:mua}
\textcolor{black}{
    \gamma_A(x,s)=\begin{cases}
        \gamma_{nl}(x,s) & \text{in $A$}, \\
        \gamma_{bg}(x) & \text{in $\Omega\setminus A$},
    \end{cases}}
\end{equation}
\textcolor{black}{
where $\gamma_{nl}$ is the nonlinear material property, while $\gamma_{bg}$ is the linear material property. Both of which can be spatially dependent. We are interested in determining the region $A$ in which the coefficient $\gamma_A$ actually depends on $s=\lvert\nabla u\rvert$ and not only on the spatial coordinates.}

\textcolor{black}{Problem \eqref{eqn:dirint} is of particular interest since it is the model for various electromagnetic problems in steady-state condition, such as the magnetostatic case in the presence of} nonlinear magnetic materials, widely used in applications. One of the most significant examples regards electrical machines, where magnetic materials like electrical steel or permanent magnets play a paramount role. In the framework of inverse problems, there is a widespread demand for non-destructive, non-ionising methods able to detect a variety of materials, for surveillance and security reasons. One of the leading applications is the detection of magnetic materials in boxes or containers~\cite{art:Mar15, book:Dorn18}, 
\textcolor{black}{ but there is also a great interest in the inspection of concrete.} For example, reinforcing bars in concrete are typically made of steel, which may be subject to corrosion. Tomographic inspections can give useful information on the state of the material and, in particular, on the number, position and shape of the rebar inside the concrete~\cite{art:So05, art:Ig03}.

As well as magnetic materials, \textcolor{black}{problem~\eqref{eqn:dirint} is also a model for the steady currents problem involving nonlinear conductive materials. Nowadays, in addition to superconductors \cite{art:Super19}, materials exhibiting a nonlinear electrical conductivity are widely employed in field grading applications \cite{art:Bu08,art:Me18}. Furthermore, human tissues may exhibit a nonlinear electrical conductivity (\cite{art:tisnl,art:skin_nl}).} 

\textcolor{black}{Electrostatic phenomena involving nonlinear dielectrics are also modelled by Equation~\eqref{eqn:dirint}. In this field, ferroelectric materials play a key role~\cite{art:die_nl} in manufactoring tunable capacitors. Nonlinear dielectric materials have also been used in semiconductor structures such as Schottky junctions~\cite{art:diode_nl}.}

From a general perspective, the inverse problem in the presence of nonlinear materials is a quite new topic in the literature. As quoted in~\cite{art:Lam20} (2020), \enquote{\emph{the mathematical analysis for inverse problems governed by nonlinear Maxwell equations is still in the early stages of development}}. As a matter of fact, there are very few papers on the subject of inverse problems for Maxwell equations in the presence of nonlinear materials. They are related to Electrical Resistance Tomography in the special case of a monomial electrical conductivity, i.e. when
\begin{displaymath}
    \sigma(x,E)=\sigma_0(x)E^p(x).
\end{displaymath}
Specifically, in~\cite{art:Sa12, art:Bra14} the Calder\'on problem for the $p$-Laplacian is posed and it is proven that the boundary values of conductivity can be uniquely determined by the DtN operator. In~\cite{art:Ca20}, the authors treat the inverse problem for the electrical conductivity given by a linear term plus a monomial term. In~\cite{art:Bra15, art:Salo16} the Monotonicity Principle (MP) is generalized to the case of monomial conductivity.
The most comprehensive results are those of \cite{art:Co21,MPMETHODS,art:Co23} where the \textcolor{black}{authors} discover a Monotonicity Principle for arbitrary nonlinear materials.

Broadly speaking, the Monotonicity Principle is a very general property which underpins of a class of real-time imaging algorithms, that have been successfully applied to a large variety of problems~\cite{art:Ta02,art:Ta06,art:Ta06p,art:Ca12,art:Ta03,art:Ta10,art:Su17,art:Ta16,art:Su17_2,book:Ta15,art:To20,garde2022reconstruction,garde2022reconstruction,albicker2023monotonicity,albicker2020monotonicity,kar2023fractional,tamburrino2021themonotonicity}. The MP states a monotone relationship between the point-wise value of the unknown material property and a proper boundary operator which can be measured. In turn, this makes it possible to determine whether or not a proper voxel (test domain) of $\Omega$ is part of the unknown anomaly $A$. As well as providing real-time performances, which is a rare feature, the MP provides upper and lower bounds \cite{art:Ha15, art:Ta02, art:Ta16_1} and clear theoretical limits in its performance \cite{albicker2020monotonicity,daimon2020monotonicity,art:Ha13}.

The MP was originally proven in~\cite{art:Gi90} in the field of steady currents problems in linear conductors. Then,~\cite{art:Ta02} recognizes its relevance in inverse problems and a new class of imaging methods and algorithms is proposed.
In~\cite{art:Co21, MPMETHODS, art:Co23}, the MP is extended to nonlinear materials, under very general assumptions. \textcolor{black}{In order to treat nonlinear materials, the authors introduce a new proper boundary operator, called \emph{average} DtN, which reflects the monotonicity of the Dirichlet energy, with respect to the material property}. Nonlinear problems where the boundary data is either large or sufficiently small are investigated in \cite{corboesposito2023thep0laplacesignature,corboesposito2023theplaplacesignature}. Other monotonicity-based reconstruction methods can be found in \cite{garde2022simplified} for piecewise constant layered conductivities and in \cite{arens2023monotonicity} for the Helmholtz equation in a closed cylindrical waveguide with penetrable scattering objects.

This work complements \cite{art:Co21,MPMETHODS, art:Co23} and provides the imaging method and related algorithm for treating nonlinear materials. To the best of our knowledge, this is the first ad-hoc imaging method for nonlinear problems. Realistic numerical examples highlight the performance of the proposed method. The paper is organized as follows: in Section \ref{sec2} the mathematical model is presented, in Section \ref{sec3} the MP for nonlinear materials is briefly reviewed, in Section~\ref{sec4} the imaging method is proposed and discussed, in Section \ref{sec5} realistic numerical examples demonstrate the key features and performance of the method and, finally, in Section \ref{sec13} the conclusions are drawn.

\section{Mathematical Model}\label{sec2}
Let $\Omega\subset\mathbb{R}^n$, \textcolor{black}{$n \ge 2$} be the region under tomographic inspection. We assume $\Omega$ to be an open bounded \textcolor{black}{connected} domain with Lipschitz boundary $\partial\Omega$. $\hat{\mathbf{n}}$ is the outer unit normal on $\partial\Omega$ and $\partial_n$ denotes the outer normal derivative defined on $\partial\Omega$. Furthermore, in the following, $L^{\infty}_+(\Omega)$ and $X_{\diamond}(\partial\Omega)$ are the functional spaces defined as
\begin{equation}
\begin{split}
    L^{\infty}_+(\Omega) & =\{u\in L^{\infty}(\Omega):u\geq c_0>0\text{ a.e. in $\Omega$}\},\\
    X_{\diamond}(\partial\Omega) & =\left\{g\in H^{1/2}(\partial\Omega): \int_{\partial\Omega} g=0\right\}.
\end{split}
\end{equation}

In order to guarantee the well-posedness of the direct problem~\eqref{eqn:dirint}, suitable assumptions on \textcolor{black}{$\gamma_A$} are required. These can be found in~\cite{MPMETHODS}, where the authors considered a more general problem, with both $A$ and $B=\Omega\setminus A$ filled by nonlinear materials. 
In the following we specialize the assumptions in~\cite{MPMETHODS} to the case of interest consisting of a bounded \textcolor{black}{nonlinear material property $\gamma_{nl}$ and a bounded linear material property $\gamma_{bg}$}. 

Specifically, it is required that \textcolor{black}{$\gamma_{bg}\in L^{\infty}_+(\Omega)$}, with
\begin{equation*}
    \textcolor{black}{c_{bg}^l\leq\gamma_{bg}(x)\leq c_{bg}^u},
\end{equation*} 
where $c_{bg}^l$ and $c_{bg}^u$ are two positive constants, and that \textcolor{black}{$\gamma_{nl}:\overline{\Omega}\times[0,+\infty)\to\mathbb{R}$} satisfies the following assumptions:
\begin{enumerate}
\item [{\bf (H1)}]\textcolor{black}{$\gamma_{nl}$} is a Carath\'eodory function.
\item [{\bf (H2)}]\textcolor{black}{$s\in [0,+\infty) \mapsto \gamma_{nl}(x,s)s$} is strictly increasing for a.e. $x\in\overline{\Omega}$.
\item [{\bf (H3)}]There exist two positive constants \textcolor{black}{$c_{nl}^l < c_{nl}^u$} such that
\begin{equation}
\label{eqn:con_mu}
\textcolor{black}{c_{nl}^l\leq\gamma_{nl}(x,s)\leq c_{nl}^u}
\end{equation}
\textcolor{black}{$\text{for a.e.}\ x\in {\overline \Omega}\ \text{and}\ \forall s\geq 0$}.
\item [{\bf (H4)}]There exists $\kappa>0$ such that
\begin{equation*}
\textcolor{black}{(\gamma_{nl}(x,s_2){\bf s}_2-\gamma_{nl}(x,s_1){\bf s}_1)\cdot( {\bf s}_2-{\bf s}_1) \geq \kappa|{\bf s}_2-{\bf s}_1|^2}
\end{equation*}
       \ $\text{for a.e.}\ x\in \Omega$, and for any \textcolor{black}{${\bf s}_1,{\bf s}_2\in\mathbb{R}^n$}.
\end{enumerate}
\begin{comment}
It is worth noting that we formulate assumptions (H1)-(H5) for $\mu_{NL}$ on all $\Omega$. This follows from practical consideration. Indeed, $\mu_{NL}$ is defined on $A$, but $A$ is the unknown of the problem. In other words, it is not possible to check if (H1)-(H5) are verified on $A$ but only on $\Omega$.
\end{comment}

\textcolor{black}{For the sake of clarity, we point out that, since $\gamma_{nl}$ is a Carath\'eodory function,}
\begin{itemize}
\item \textcolor{black}{$x\in\overline\Omega\mapsto \gamma_{nl}(x,s)$} is measurable for every \textcolor{black}{$s\in[0,+\infty)$},
\item \textcolor{black}{$s\in [0,+\infty)\mapsto \gamma_{nl}(x, s)$} is continuous for almost every $x\in\Omega$.
\end{itemize}

Problem~\eqref{eqn:dirint} can be cast in the weak form as
\begin{equation*}
\label{eqn:weakform}
\textcolor{black}{
    \int_{\Omega}\gamma(x,\abs{\nabla u(x)})\nabla u(x)\cdot \nabla\varphi(x)\,dx=0\quad \forall \varphi \in C^{\infty}_0(\Omega),
    }
\end{equation*}
\textcolor{black}{where $u \in H^1(\Omega)$ and $u\rvert_{\partial\Omega}=f\in X_{\diamond}(\partial\Omega)$}.

Furthermore (see~\cite{MPMETHODS}), the unique weak solution \textcolor{black}{$u$} of problem~\eqref{eqn:weakform} can be variationally characterized as
\begin{equation}
\textcolor{black}{
    u=\argmin\left\{\mathbb{E}(v) : v\in H^1(\Omega),\,  v\rvert_{\partial\Omega}=f\in X_{\diamond}(\partial\Omega)\right\},}
\label{eqn:varprob}
\end{equation}
\textcolor{black}{where} the so-called Dirichlet Energy $\mathbb{E}$ is defined as
\begin{equation*}
\mathbb{E}(u)=\int_{\Omega} Q\left(x,\abs{\nabla u}\right)\,dx,
\end{equation*}
with $Q$ \textcolor{black}{satisfying}
\begin{displaymath}
\textcolor{black}{
    Q(x,s)=\int_0^{s}\gamma(x,\eta)\eta\,d\eta \quad\text{for a.e. $x\in\overline{\Omega}$ and $\forall\,s>0$}.}
\end{displaymath}

% \begin{rem}
%     All the results of this work can be extended to an arbitrary dimension $n>3$, for problems whose mathematical model is given by~\eqref{eqn:dirint}.
% \end{rem}

\subsection{\textcolor{black}{Connection to physical problems}}
Although all the results of this paper are valid in $\mathbb{R}^n$ with arbitrary $n \in \mathbb{N}$, when $n=2$ or $n=3$, problem \eqref{eqn:dirint} may constitute a magnetostatic\textcolor{black}{, electrostatic or steady currents problem} from a physical standpoint. The connection to physical problems is summarized in Table \ref{tab_01_my_label}.

\begingroup
\renewcommand{\arraystretch}{1.5} % Default value: 1
\begin{table}[htb]
    \centering
    \begin{tabular}{P{0.25\textwidth}P{0.125\textwidth}P{0.235\textwidth}P{0.2\textwidth}}
        \toprule
        Physical model & Material property & Imposed boundary data & Measured boundary data\\
        \midrule
        Magnetostatic & $\mu$ & Scalar magnetic potential & Magnetic flux density \\
        Electrostatic & $\varepsilon$ & Scalar electric potential & Electric flux density \\
        Steady currents & $\sigma$ & Scalar electric potential & Current density \\
        \bottomrule
    \end{tabular}
    \caption{Summary of possible physical models corresponding to equations~\eqref{eqn:dirint} and the related imposed and measured quantities.}
    \label{tab_01_my_label}
\end{table}
\endgroup

\subsubsection{\textcolor{black}{Magnetostatic case}}
\textcolor{black}{Assuming $\Omega$ to be simply connected and free from electrical current densities, and $\gamma=\mu$, where $\mu$ is the nonlinear magnetic permeability, \eqref{eqn:dirint} represents a magnetostatic problem described via the magnetic scalar potential, i.e. \textcolor{black}{$\mathbf{H}=-\nabla u$}, with $\mathbf{H}$ being the magnetic field.}

When $n=3$, it is a \textcolor{black}{\lq\lq classical\rq\rq} magnetostatic problem in three dimensions. On the other hand, when $n=2$, we have a magnetostatic problem under the assumption that the geometry of the domain, the magnetic permeability and the source \textcolor{black}{applied on the boundary} are invariant along one axis, \textcolor{black}{the longitudinal axis ($z-$axis)}. Thus, the analysis can be conducted in a section contained in the $(x,y)$ plane, where the model is again given by problem~\eqref{eqn:dirint}. 

In order to understand the physical meaning of \textcolor{black}{$f=u|_{\partial\Omega}$}, i.e. the value of the scalar magnetic potential imposed on $\partial \Omega$, it is possible to consider the configuration in Figure~\ref{fig_01_domain} where the domain $\Omega$ is surrounded by a material with infinite magnetic permeability and, on $\partial \Omega$, a surface current density $\mathbf{J}_s$ is applied. 

\begin{figure}[htp]
    \centering
\includegraphics[width=0.35\textwidth]{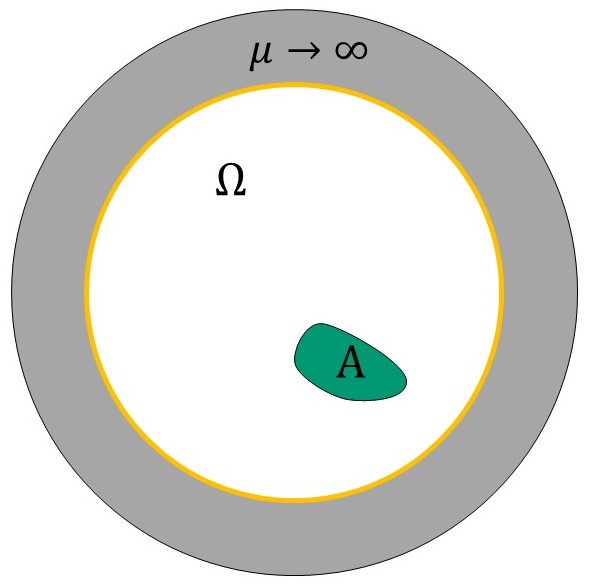}
    \caption{Geometry of the sample under test \textcolor{black}{for the magnetostatic case}. In white the domain $\Omega$, in green the anomaly $A$ and in gray the material surrounding the domain $\Omega$ characterized by an infinite magnetic permeability. \textcolor{black}{A prescribed surface current density $\mathbf{J}_s$ (in yellow) is imposed on $\partial \Omega$.}}
    \label{fig_01_domain}
\end{figure}

The jump condition for the tangential component of the magnetic field, combined with $\mathbf{H}=\mathbf{0}$ in the material with infinite permeability, gives
\begin{equation} \label{eqn:jumpC}
    \hat{\mathbf{n}}\times\mathbf{H}=\mathbf{J}_s.
\end{equation}
Equation \eqref{eqn:jumpC} can be cast in terms of the magnetic scalar potential and the \emph{surface} gradient as \textcolor{black}{$\mathbf{J}_s=-\hat{\mathbf{n}}\times\nabla_s u|_{\partial\Omega}$} or, equivalently, as \textcolor{black}{$\nabla_s u|_{\partial\Omega} = - \mathbf{J}_s \times \hat{\mathbf{n}}$}. Therefore,
\begin{displaymath}
\textcolor{black}{
    f(x)=f(x_0)-\int_{x_0}^x \mathbf{J}_s\times\hat{\mathbf{n}}\cdot\hat{\mathbf{t}}\,\mathrm{d}l,
    }
\end{displaymath}
where the line integral is carried out along an arbitrary curve oriented from $x_0$ to $x$ and lying on  $\partial\Omega$.
\textcolor{black}{$x_0$ is a prescribed reference point and the value of $f(x_0)$ is chosen when imposing the uniqueness of the solution.}

\textcolor{black}{For the magnetostatic problem, the DtN operator $\Lambda$ maps the imposed boundary magnetic potential to the (entering) normal component of the magnetic flux density on $\partial \Omega$}
\begin{displaymath}
    \textcolor{black}{
    \Lambda:f\in X_{\diamond}(\partial\Omega)\to -\mathbf{B}\cdot\hat{\mathbf{n}}|_{\partial\Omega}=\mu\partial_n u|_{\partial\Omega}\in X'_{\diamond}(\partial\Omega).}
\end{displaymath}

\subsubsection{\textcolor{black}{Electrostatic case}}
\textcolor{black}{
Assuming $\gamma=\varepsilon$, where $\varepsilon$ is the nonlinear dielectric permittivity, \eqref{eqn:dirint} represents an electrostatic  problem described via the electric scalar potential, i.e. \textcolor{black}{$\mathbf{E}=-\nabla u$}, with $\mathbf{E}$ being the electric field. As for the previous case, for $n=3$ we have an electrostatic problem in 3D, whereas $n=2$ corresponds to a $z-$invariant problem.}

\textcolor{black}{In this case $f=u\rvert_{\partial\Omega}$ represents the imposed boundary voltage, i.e.}
\begin{equation}\label{eqn:pot_f}
\textcolor{black}{
    f(x)=f(x_0)-\int_{x_0}^x \mathbf{E}\cdot \hat{\mathbf{t}}\,\mathrm{d}l},
\end{equation}
\textcolor{black}{where the line integral is carried out along an arbitrary curve oriented from $x_0 \in \partial \Omega$ to $x \in \partial \Omega$. $x_0$ is a prescribed reference point and the value of $f(x_0)$ is chosen when imposing the uniqueness of the solution.}

\textcolor{black}{For the electrostatic problem, the DtN operator $\Lambda$ maps the imposed boundary electric potential to the (entering) normal component of the electric flux density on $\partial \Omega$}
\begin{displaymath}
    \textcolor{black}{
    \Lambda:f\in X_{\diamond}(\partial\Omega)\to -\mathbf{D}\cdot\hat{\mathbf{n}}|_{\partial\Omega}=\varepsilon\partial_n u|_{\partial\Omega}\in X'_{\diamond}(\partial\Omega).}
\end{displaymath}
%\textcolor{black}{It is worth noting that the flux of the electrical flux density $\mathbf{D}$ through some subset of $\partial\Omega$ gives the induced electric charge on the considered subset.}

\subsubsection{\textcolor{black}{Steady currents case}}
\textcolor{black}{
Finally, assuming $\gamma=\sigma$, where $\sigma$ is the nonlinear electrical conductivity, \eqref{eqn:dirint} represents a steady currents problem described via the electric scalar potential, i.e. \textcolor{black}{$\mathbf{E}=-\nabla u$}, with $\mathbf{E}$ being the electric field.
%Finally, if $\gamma=\sigma$, where $\sigma$ is the nonlinear electrical conductivity, problem~\eqref{eqn:dirint} corresponds to the model of the steady-state current problem. In this case $u$ is the electric scalar potential as for the electrostatic problem and 
The boundary data $f$ assumes the same meaning as~\eqref{eqn:pot_f}.}

\textcolor{black}{For the steady currents problem, the DtN operator $\Lambda$ maps the imposed boundary electric potential onto the (entering) normal component of the current density on $\partial \Omega$}
\begin{displaymath}
    \textcolor{black}{
    \Lambda:f\in X_{\diamond}(\partial\Omega)\to -\mathbf{J}\cdot\hat{\mathbf{n}}|_{\partial\Omega}=\sigma\partial_n u|_{\partial\Omega}\in X'_{\diamond}(\partial\Omega).}
\end{displaymath}

\section{Imaging Method}\label{sec3}
\subsection{The MP for the inverse obstacle problem}
\textcolor{black}{Hereafter, we adopt the following.}
\begin{nota}
\textcolor{black}{
    The DtN operator corresponding to material property $\gamma_a^b$, is denoted by $\Lambda$ equipped with the same superscripts and subscripts, i.e. by symbol $\Lambda_a^b$. In other words, symbol $\gamma$ is replaced by $\Lambda$. The rule applies also when no superscripts or subscripts are present and extends to the average DtN operator, defined in the following.
    }
\end{nota}
The following definitions are adopted from \cite{art:Co21}.
\begin{defn}\label{def:3}
    Let \textcolor{black}{$\gamma$} be a (nonlinear) \textcolor{black}{material property} defined on $\Omega$ and let \textcolor{black}{$\Lambda$} be the related DtN operator. The average DtN operator \textcolor{black}{$\overline{\Lambda}$} is defined as
\begin{displaymath}
    \textcolor{black}{\overline{\Lambda}:f\in X_{\diamond}(\partial\Omega) \to \int_0^1\Lambda(\alpha f)\,d\alpha \in X'_{\diamond}(\partial\Omega)},
\end{displaymath}
where
\begin{equation*}
\textcolor{black}{
    \langle\overline{\Lambda}(f),\varphi\rangle=\int_0^1 \langle \Lambda(\alpha f),\varphi\rangle\,d\alpha\quad \forall \varphi\in X_{\diamond}(\partial\Omega)}.
\end{equation*}
\end{defn}
\begin{defn}
    Inequality \textcolor{black}{$\gamma_1\leq\gamma_2$} means that
    \begin{equation*}
    \textcolor{black}{
        \gamma_1(x,s)\leq\gamma_2(x,s) \text{ for a.e. } x\in\overline{\Omega} \text{ and }\forall\, s>0}
    \end{equation*}
\end{defn}
\begin{defn}\label{def:def3}
    Inequality $\overline{\Lambda}_{1}\leqslant\overline{\Lambda}_{2}$ means that
    \begin{equation*}
    \left\langle \overline{\Lambda}_{1}(f), f\right\rangle\leq\left\langle \overline{\Lambda}_{2}(f), f\right\rangle, \, \forall f\in X_{\diamond}(\partial\Omega).
    \end{equation*}
\end{defn}

\begin{thm}[Monotonicity Principle,~\cite{art:Co21, MPMETHODS}]\label{th:MP}
\textcolor{black}{Let \textcolor{black}{$\gamma_1$ and $\gamma_2$ be two material properties} satisfying (H1)-(H4), then} 
\begin{equation}\label{eqn:MP1}
\textcolor{black}{
    \gamma_1\leq\gamma_2 \Longrightarrow \overline{\Lambda}_{1}\leqslant\overline{\Lambda}_{2}}.
\end{equation}
\end{thm}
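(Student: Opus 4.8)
The plan is to reduce the monotonicity of the average DtN operator to the monotonicity of the Dirichlet energy, exploiting the variational characterization in \eqref{eqn:varprob}. First I would establish the key identity linking $\langle\overline{\Lambda}(f),f\rangle$ to the Dirichlet energy $\mathbb{E}$. Recall that for the (standard) DtN operator one has $\langle\Lambda(g),g\rangle=\int_\Omega\gamma(x,|\nabla u_g|)|\nabla u_g|^2\,dx$, where $u_g$ solves \eqref{eqn:weakform} with datum $g$. Writing $g=\alpha f$ and integrating in $\alpha$ over $[0,1]$, the definition of $\overline{\Lambda}$ gives
\begin{equation*}
\langle\overline{\Lambda}(f),f\rangle=\int_0^1\frac{1}{\alpha}\langle\Lambda(\alpha f),\alpha f\rangle\,d\alpha=\int_0^1\frac{1}{\alpha}\int_\Omega\gamma(x,|\nabla u_{\alpha f}|)|\nabla u_{\alpha f}|^2\,dx\,d\alpha.
\end{equation*}
The aim is to recognize the right-hand side as $2\mathbb{E}(u_f)$ (or a fixed positive multiple of it), by relating the integrand to $\frac{d}{d\alpha}$ of a suitable quantity; this is the computation already carried out in \cite{art:Co21,MPMETHODS}, and I would invoke it as the bridge identity
\begin{equation*}
\langle\overline{\Lambda}(f),f\rangle=2\int_\Omega Q(x,|\nabla u_f|)\,dx=2\,\mathbb{E}(u_f).
\end{equation*}

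Second, given the bridge identity, the problem becomes: show $\gamma_1\le\gamma_2$ implies $\mathbb{E}_1(u_f^{(1)})\le\mathbb{E}_2(u_f^{(2)})$, where $u_f^{(i)}$ is the minimizer for $\gamma_i$ and $\mathbb{E}_i$ the associated Dirichlet energy with integrand $Q_i$. Here I would use the minimality property \eqref{eqn:varprob}: since $u_f^{(2)}$ is admissible for the variational problem defining $u_f^{(1)}$ (both have trace $f$), we get $\mathbb{E}_1(u_f^{(1)})\le\mathbb{E}_1(u_f^{(2)})$. Then the pointwise inequality $\gamma_1(x,s)\le\gamma_2(x,s)$ integrates, via $Q_i(x,s)=\int_0^s\gamma_i(x,\eta)\eta\,d\eta$, to $Q_1(x,s)\le Q_2(x,s)$ for all $s$, hence $\mathbb{E}_1(u_f^{(2)})\le\mathbb{E}_2(u_f^{(2)})$. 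Chaining the two inequalities yields $\mathbb{E}_1(u_f^{(1)})\le\mathbb{E}_2(u_f^{(2)})$, and multiplying by $2$ and using the bridge identity gives $\langle\overline{\Lambda}_1(f),f\rangle\le\langle\overline{\Lambda}_2(f),f\rangle$ for every $f\in X_\diamond(\partial\Omega)$, which is exactly $\overline{\Lambda}_1\leqslant\overline{\Lambda}_2$ by Definition~\ref{def:def3}.

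The main obstacle I anticipate is the rigorous justification of the bridge identity, i.e. that $\langle\overline{\Lambda}(f),f\rangle$ equals (a constant multiple of) $2\mathbb{E}(u_f)$. This requires care with the $\alpha$-dependence of the solutions $u_{\alpha f}$: one must control measurability and integrability of $\alpha\mapsto\langle\Lambda(\alpha f),\alpha f\rangle$ near $\alpha=0$ (the factor $1/\alpha$ is a priori singular), and the identification essentially re-derives the fundamental theorem of calculus relating $Q$ and $\gamma$ under the growth bounds (H3) and the strict monotonicity (H2)–(H4). Since this excerpt explicitly attributes the Monotonicity Principle and the average DtN construction to \cite{art:Co21,MPMETHODS}, in practice I would state this identity as a lemma quoted from those references rather than reprove it, and the remaining argument — the two-line energy comparison via the variational principle plus monotonicity of $Q$ in $\gamma$ — is then entirely elementary.
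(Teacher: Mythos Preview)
The paper does not supply its own proof of Theorem~\ref{th:MP}; the result is quoted verbatim from \cite{art:Co21,MPMETHODS}. Your proposal is therefore not to be compared against anything in this excerpt, but it does reproduce the argument of those references: the identity $\langle\overline{\Lambda}(f),f\rangle=\mathbb{E}(u_f)$ (obtained there precisely by showing $\frac{d}{d\alpha}\mathbb{E}(u_{\alpha f})=\langle\Lambda(\alpha f),f\rangle$ and integrating from $0$ to $1$), followed by the two-step energy comparison via minimality and $Q_1\le Q_2$. One small correction: the bridge identity has constant $1$, not $2$ --- in the linear case $\overline{\Lambda}=\tfrac12\Lambda$ and $\mathbb{E}(u_f)=\tfrac12\langle\Lambda f,f\rangle$, so $\langle\overline{\Lambda}f,f\rangle=\mathbb{E}(u_f)$ --- but as you already noted, any fixed positive constant suffices for the monotonicity conclusion.
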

\textcolor{black}{The relationship~\eqref{eqn:MP1} expresses the Monotonicity Principle between material properties and boundary data. In particular, it states that if the material property is increased at any point of the domain and for any value of $\lvert \nabla u \rvert$, then the average DtN increases. Moreover, as a boundary operator, the average DtN can be measured from the boundary of the domain only. In this way, an \lq\lq internal\rq\rq \ condition can be detected from boundary data.}

\textcolor{black}{Let $A \subset \subset\Omega$ be the unknown anomaly occupied by the nonlinear material. The anomaly $A$ is well contained in $\Omega$, i.e. is assumed to be at a non vanishing distance from the boundary of $\Omega$. In order to turn \eqref{eqn:MP1} into an imaging method for the inverse obstacle problem, it is necessary to introduce the concept of test anomaly. The test anomaly $T \subset \subset \Omega$ is nothing but a proper anomaly occupying a known region and such that}
\begin{equation}\label{eqn:MP12}
     \textcolor{black}{T \subseteq A \Longleftrightarrow \gamma_T \leq \gamma_A.}
 \end{equation}

\textcolor{black}{Combining \eqref{eqn:MP1} and \eqref{eqn:MP12} gives}
\begin{equation}\label{eqn:MP11}
     \textcolor{black}{T \subseteq A \Longrightarrow \overline{\Lambda}_T \leqslant \overline{\Lambda}_A.} 
 \end{equation}
%\textcolor{black}{where $\overline{\Lambda}_T$ and $\overline{\Lambda}_A$ are shorthand notations for the average DtNs corresponding to $\gamma_T$ and $\gamma_A$, respectively.}

Starting from this proposition, it is possible to develop an imaging method specifically designed for the inverse obstacle problem, as discussed in the next subsection.

\subsection{Imaging Method}\label{sec:ia}
Equation \eqref{eqn:MP11} is equivalent to
\begin{equation}
\label{eqn:MP13}
\overline{\Lambda}_{T}\not\leqslant\overline{\Lambda}_{A} \Longrightarrow T\not\subseteq A,
\end{equation}
where $\overline{\Lambda}_{T}\not\leqslant\overline{\Lambda}_{A}$ is understood as
\begin{equation}\label{eqn:MP3}
     \exists f\in X_{\diamond}(\partial\Omega) : \langle \overline{\Lambda}_{T}(f),f\rangle> \langle \overline{\Lambda}_{A}(f),f\rangle.
\end{equation}

Equation~\eqref{eqn:MP13}, first proposed in \cite{art:Ta02} for linear materials, underpins the imaging method. Indeed, it makes it possible to infer some information on the behaviour of the unknown magnetic permeability in the interior of $\Omega$, starting from the boundary data. Specifically, relation~\eqref{eqn:MP13} makes it possible to establish whether a test anomaly $T$ is not completely included in the actual anomaly $A$, starting from the knowledge of the boundary data $\overline{\Lambda}_A$ and $\overline{\Lambda}_T$.

Let $\{ T_k \}_k$ be a covering of the region of interest (ROI). The ROI is contained or equal to $\Omega$. By evaluating the elementary test of~\eqref{eqn:MP3} on each $T_k$, it is possible to discard most of or all the $T_k$s that are not completely contained in the unknown anomaly $A$. Indeed, if $\overline{\Lambda}_{T_k}\not\leqslant\overline{\Lambda}_A$ , then we exclude $T_k$ from contributing to the estimate $A^U$ of the unknown anomaly $A$. The basic reconstruction scheme (see \cite{art:Ta02}) is
 \begin{equation}\label{eqn:alg}
    A^U=\bigcup_{k}\{T_k | \overline{\Lambda}_{T_k}\leqslant\overline{\Lambda}_A\}.
 \end{equation}
\begin{rem}
If $A$ is the union of some of the $T_k$s, then $A \subseteq A^U$ (see \cite{art:Ta02, art:Ha15, art:Ta16_1}).
\end{rem}

\begin{comment}
Even if in the following we always refer to~\eqref{eqn:alg} as the imaging algorithm, we want to underline that it is not the only possible scheme. As reported in~\cite{art:Ta02} for the case of linear materials, exchanging the role of $T$ and $A$, it is possible to infer if the anomaly $A$ is not included in the test anomaly $T$ from boundary measurements. In this case the estimate $\tilde{A}$ is given by the intersection of all the test anomalies containing $A$. Adopting this second reconstruction rule, it is easy to observe that $\tilde{A}\subseteq A$.
\end{comment}

Although the inversion algorithm follows in fairly simply from the Monotonicity Principle, the practical implementation of the reconstruction rule in~\eqref{eqn:alg} provides some important challenges, especially for treating nonlinear materials. Indeed, for linear materials, it can be easily proven that
\begin{equation}
\label{eqn:LinDtN}
    \overline{\Lambda}=\frac{1}{2}\Lambda,
\end{equation}
where $\Lambda$ is the classical DtN operator, i.e. a linear one.
Therefore, condition~\eqref{eqn:MP3} is true if $\Lambda_{A}-\Lambda_{T_k}$ has at least one negative eigenvalue. On the contrary, for the nonlinear case, condition~\eqref{eqn:MP3} does not correspond to an eigenvalue problem. No general mathematical tools are available to establish the existence of a boundary potential satisfying~\eqref{eqn:MP3}.

\subsection{\textcolor{black}{Design of the Test Anomalies}}
\textcolor{black}{In designing the test anomalies, it is necessary to distinguish two possible cases because the material property of the background may be well separated from that of the nonlinear material (Figure \ref{fig_02_sepa} and \ref{fig_02_sepb}) or not well separated (Figure \ref{fig_02_sepc})}.
\begin{figure}[htp]
    \centering
    \subfloat[][\label{fig_02_sepa}]
    {\includegraphics[width=.45\textwidth]{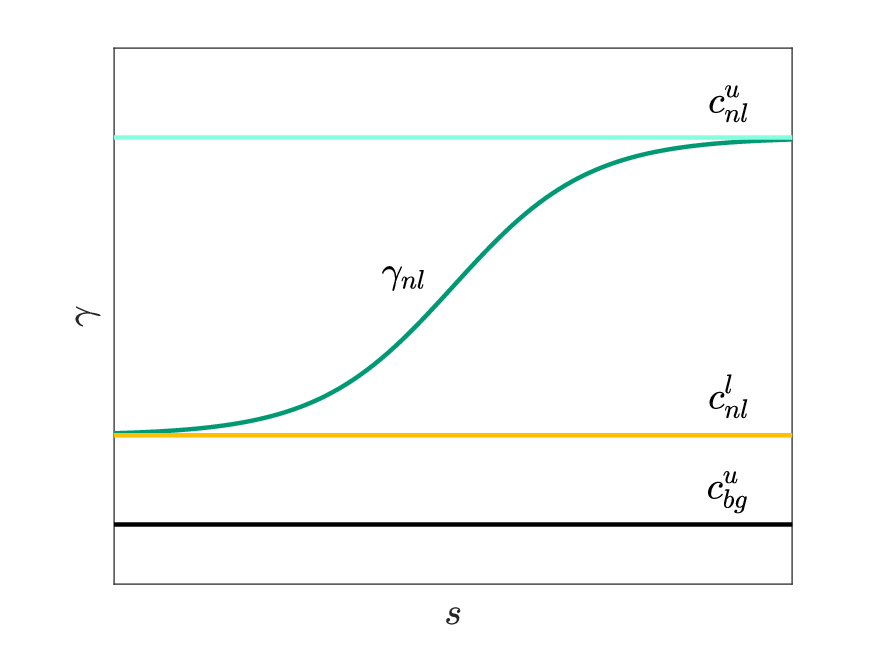}} \quad
    \subfloat[][\label{fig_02_sepb}]
    {\includegraphics[width=.45\textwidth]{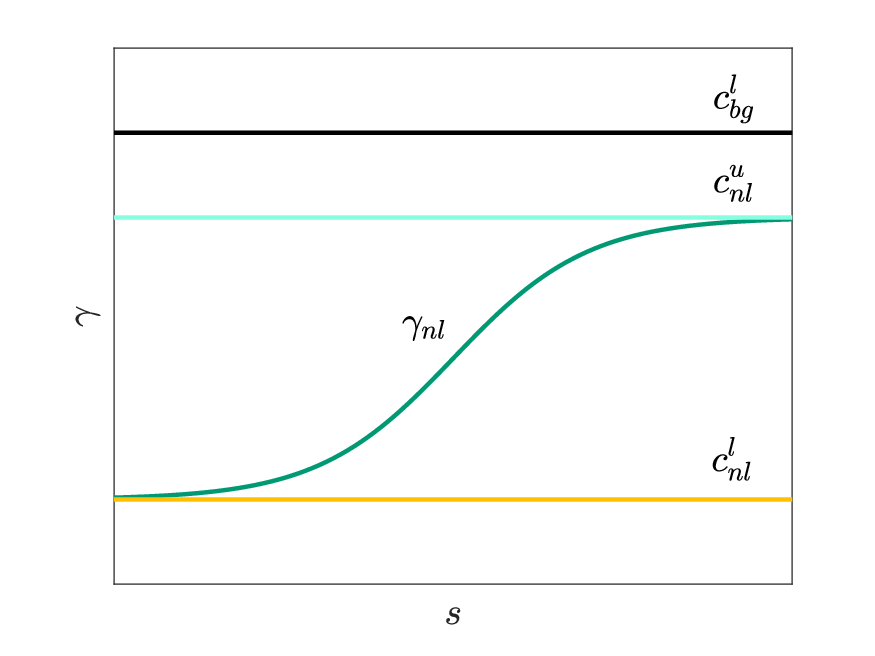}} \\
    \subfloat[][\label{fig_02_sepc}]
    {\includegraphics[width=.45\textwidth]{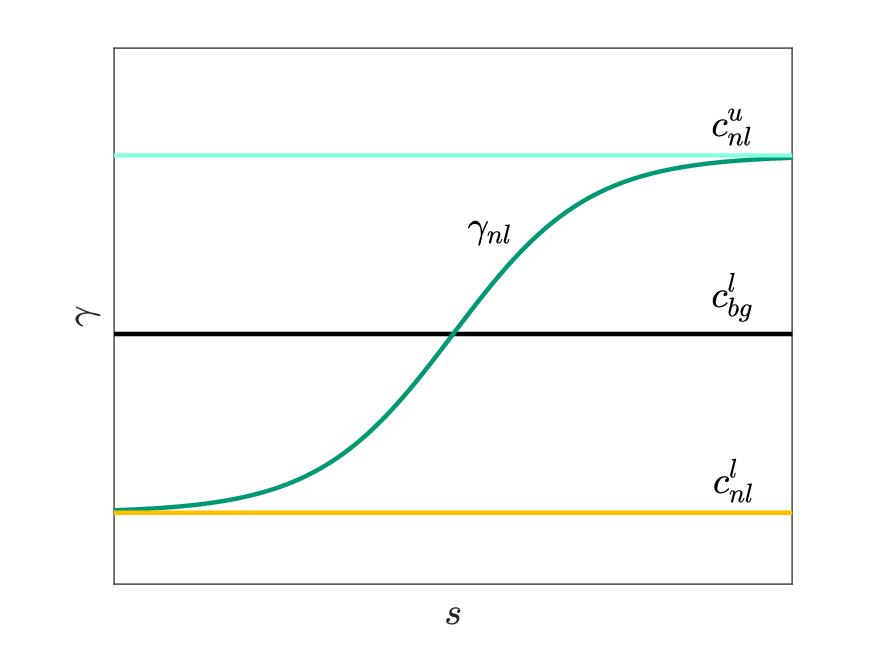}} 
    \caption{\textcolor{black}{A: well separated ($c_{nl}^l>c_{bg}^u$) material properties. B: well-separated ($c_{nl}^u<c_{bg}^l$) material properties. C: not well separated material properties.}}
%    \label{fig_02_sep}
\end{figure}

\subsubsection{Well separated material properties}
\textcolor{black}{In this case either $c_{nl}^l > c_{bg}^u$ or $c_{bg}^l > c_{nl}^u$. In this contribution we consider the former case $(c_{nl}^l > c_{bg}^u)$ only, because the latter one  can be treated similarly.}

\textcolor{black}{A proper option for defining the material property for a test anomaly in $T$ is}
\begin{equation}\label{eqn:t1}
\textcolor{black}{
    \gamma_T(x,s)=\begin{cases}
        \gamma_{nl}(x,s) & \text{in $T$} \\
        \gamma_{bg}(x) & \text{in $\Omega\setminus T$}.
    \end{cases}
    }
\end{equation}
\textcolor{black}{In other words, the test anomaly is given by the same material property as the unknown anomaly.}

\textcolor{black}{This definition for $\gamma_T$ satisfies condition \eqref{eqn:MP12}, as required.}

\subsubsection{\textcolor{black}{Not well separated material properties}}
\label{subs:notwellsep}
\textcolor{black}{When $c_{nl}^l < c_{bg}^u < c_{nl}^u$, the material property for a test anomaly in $T$ is defined as follows
\begin{equation}\label{eqn:t2}
\textcolor{black}{
    \gamma_T(x,s)=\begin{cases}
            \gamma_{nl}(x,s) & \text{in $T$} \\
            \min({\gamma_{bg}(x),\gamma_{nl}(x,s)})    & \text{in $\Omega\setminus T$}.
    \end{cases}}
\end{equation}}

\textcolor{black}{This definition for $\gamma_T$ satisfies condition \eqref{eqn:MP12}, as required.}

\textcolor{black}{The dual case of $c_{nl}^l < c_{bg}^l < c_{nl}^u$, can be treated similarly.}
%neither the upper bound of \textcolor{black}{$\gamma_{nl}$} is larger than the lower bound of $\gamma_{bg}$ ($\gamma_u > c_{bg}^l $) or, vice versa, the lower bound of \textcolor{black}{$\gamma_{nl}$} is smaller than the upper bound of $\mu_{bg}$, some changes are needed in the choice of the material property for the test anomalies. Indeed, it is straightforward to verify that the material property in~\eqref{eqn:t1} does not fulfill~\eqref{eqn:MP12}.

%\textcolor{black}{In order to meet condition~\eqref{eqn:MP12}, the material property assigned to the test anomalies can be modified as follows}

\section{Testing condition $\overline{\Lambda}_{T}\nleqslant\overline{\Lambda}_{A}$}\label{sec4}
Hereafter, we assume that $c_{bg}^u < c_{nl}^l$. A similar treatment can be developed when \textcolor{black}{$c_{nl}^u < c_{bg}^l$}.

As highlighted in the previous section, condition~\eqref{eqn:MP13} represents the elementary monotonicity test, making it possible to infer whether or not a test anomaly $T$ is contained in the anomaly $A$ from the knowledge of the average DtN operators, i.e. from $\overline{\Lambda}_T$ and the measured data $\overline{\Lambda}_A$. The major challenge, in setting up a Monotonicity Principle based inversion method, is given by the search for the proper boundary data $f$, if any, such that $\langle \overline{\Lambda}_{T}(f),f\rangle> \langle \overline{\Lambda}_{A}(f),f\rangle$. This problem is challenging because of the nonlinear nature of operators $\overline{\Lambda}_A$ and $\overline{\Lambda}_T$.

A na\"ive idea is to pose the problem in terms of a nonquadratic minimization
\begin{equation}
\label{eqn:iter_prob}
    \min_{f}\langle \overline{\Lambda}_A(f)-\overline{\Lambda}_T(f), f\rangle < 0,
\end{equation}
where the aim is to verify whether or not the minimum of the functional $\langle \overline{\Lambda}_A(f)-\overline{\Lambda}_T(f), f\rangle$ is negative. Unfortunately, this approach is not at all practical and may require a huge number of iterations, and hence measurements, with an execution time clearly incompatible with real-world applications.

In the following, we propose a systematic approach to select an a priori set of \emph{proper} boundary data $f$ to verify whether or not $\overline{\Lambda}_{T}\not\leqslant\overline{\Lambda}_{A}$ is true. These candidate \lq\lq test\rq\rq\  boundary data are evaluated before the measurements, thus preserving the compatibility of the proposed method with real-time applications.

\subsection{Basic idea and main result}
\label{sec:basid}
As mentioned above, the aim is to find a set of boundary potentials $f$ that are able to reveal whether a test anomaly $T$ is not included in $A$, i.e. such that
\begin{displaymath}
    \langle \overline{\Lambda}_A(f)-\overline{\Lambda}_T(f),f\rangle<0 \ \ \text{for $T\not\subseteq A$}.
\end{displaymath}
What makes the problem difficult is that these boundary potentials have to be computed in advance and before the measurement process takes place, i.e. without any knowledge of $A$.

The key for evaluating these potentials entails finding a quadratic upper bound to both $\langle \overline{\Lambda}_A(f),f\rangle$ and $\langle -\overline{\Lambda}_T(f),f\rangle$. Once this quadratic upper bound is available, the required boundary potential can be evaluated via eigenvalue computation.

\subsubsection{Upper bound to $\langle \overline{\Lambda}_A(f),f\rangle$}
Let $F$ be a known domain. When $A \subseteq F$, we have (see Figure~\ref{fig_03_dis1})
\begin{equation} \label{eqn:ineq}
\textcolor{black}{\gamma_A \le \gamma_F \le \gamma_F^u},    
\end{equation}
where
\begin{equation}\label{eqn:mufl}
\textcolor{black}{
    \gamma_F^u(x)=\begin{cases}
        c_{nl}^u & \text{in $F$}\\
        \gamma_{bg}(x) & \text{in $\Omega\setminus F$}.
    \end{cases}
    }
\end{equation}

\begin{figure}[htp]
\centering
\subfloat[][]
{\includegraphics[width=.3\textwidth]{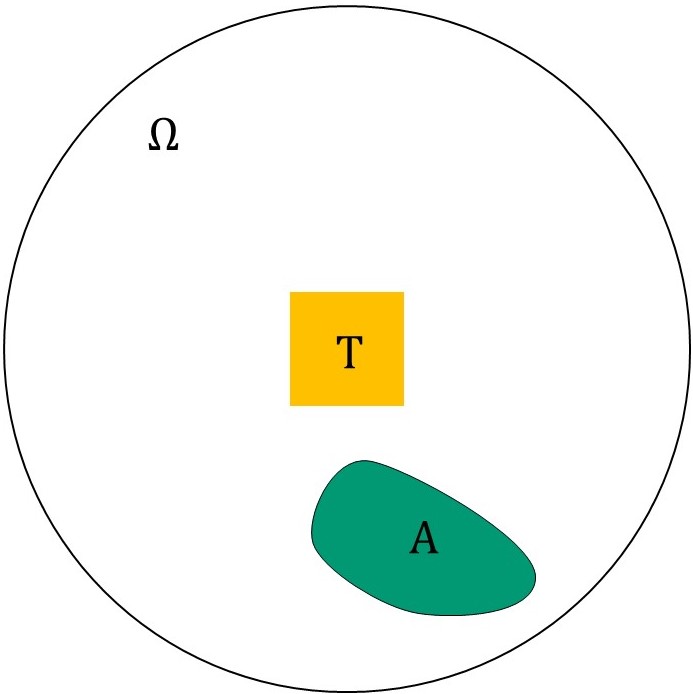}} \quad
\subfloat[][]
{\includegraphics[width=.3\textwidth]{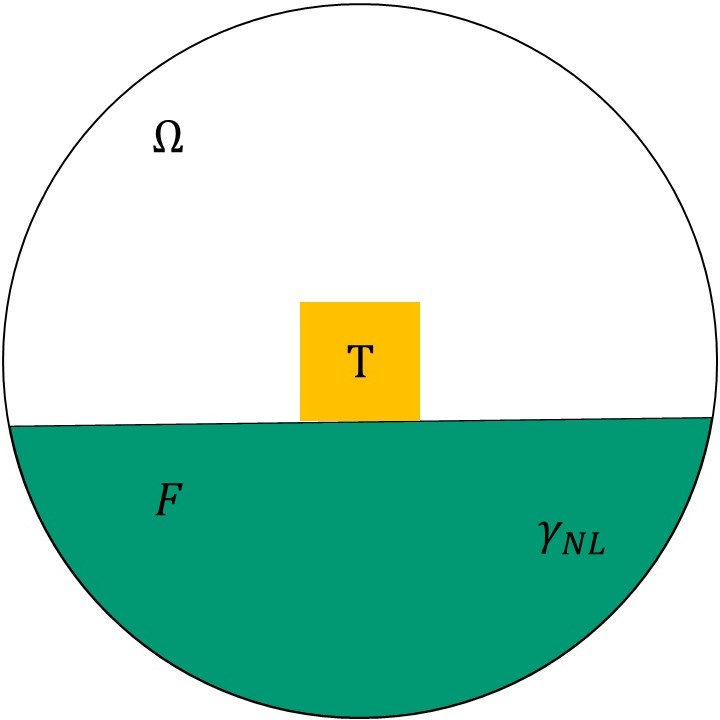}}\quad
\subfloat[][]
{\includegraphics[width=.3\textwidth]{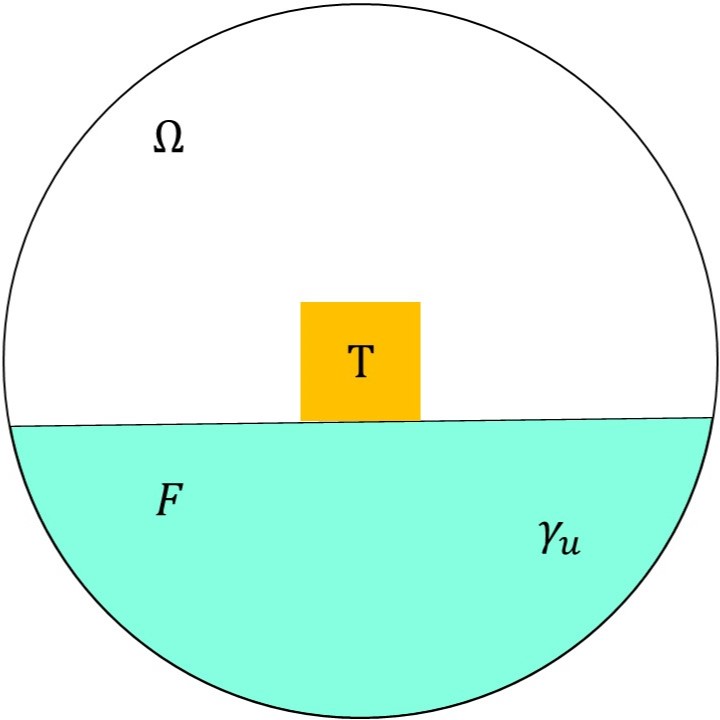}}
\caption{Left: the actual configuration. Center: the real anomaly is included in a known anomaly $F$. Right: the nonlinear material in $F$ is replaced by a linear one.}
\label{fig_03_dis1}
\end{figure}

As a consequence, the Monotonicity Principle applied to \eqref{eqn:ineq}, supplies the following inequality chain
\begin{equation}
\label{eqn:firstIneq}
    \overline{\Lambda}_A\leq\overline{\Lambda}_F\leq\overline{\Lambda}_F^u,
\end{equation}
where $\overline{\Lambda}_F^u$ is the average DtN operator for $\gamma_F^u$.

Equation \eqref{eqn:firstIneq} provides the required quadratic upper bound
\begin{equation}\label{eqn:firstIneq2}
    \langle \overline{\Lambda}_A(f),f\rangle \le \langle \overline{\Lambda}_F^u f,f\rangle, \ \forall f \in X_{\diamond}(\partial \Omega),
\end{equation}
because $\overline{\Lambda}_F^u$ is a linear operator since the underlying material property $\gamma_F^u$ is linear. Indeed, \textcolor{black}{$\gamma_F^u$} corresponds to an anomaly in $F$ containing the value of the upper bound to \textcolor{black}{$\gamma_{nl}$} as a \textcolor{black}{material property}.

\subsubsection{Upper bound to $\langle -\overline{\Lambda}_T(f),f\rangle$}
We have
\begin{equation}
\label{eqn:lowbounds}
    \textcolor{black}{\gamma_T \ge \gamma_T^l},    
\end{equation}
where
\begin{equation}\label{eqn:mut}
 \textcolor{black}{\gamma_{T}^l(x)=\begin{cases}
        c_{nl}^l & \text{in $T$}, \\
        \gamma_{bg}(x) & \text{in $\Omega\setminus T$}.
    \end{cases}}
\end{equation}
The material property \textcolor{black}{$\gamma_{T}^l$} corresponds to a linear material where the anomalous region $T$ is filled by $c_{nl}^l$, a lower bound to $\gamma_{nl}$ (see Figure \ref{fig_04_fict}). 
Finally, the Monotonicity Principle applied to \eqref{eqn:lowbounds}, yields the following inequality
\begin{equation}
\label{eqn:secondIneq}
    -\overline{\Lambda}_T \leq -\overline{\Lambda}_T^l,
\end{equation}
i.e.
\begin{equation}\label{eqn:secondIneq2}
    -\langle \overline{\Lambda}_T(f),f\rangle \le - \langle \overline{\Lambda}_T^l f,f\rangle, \ \forall f \in X_{\diamond}(\partial \Omega).
\end{equation}

% \begin{rem}
% \textcolor{black}{$\gamma_T^l(x)$} is a feasible \textcolor{black}{material property} for test anomalies, since it ensures that \textcolor{black}{$\gamma_T^l\leq\mu_A \Longleftrightarrow T\subseteq A$}.
% \end{rem}

\begin{figure}[htp]
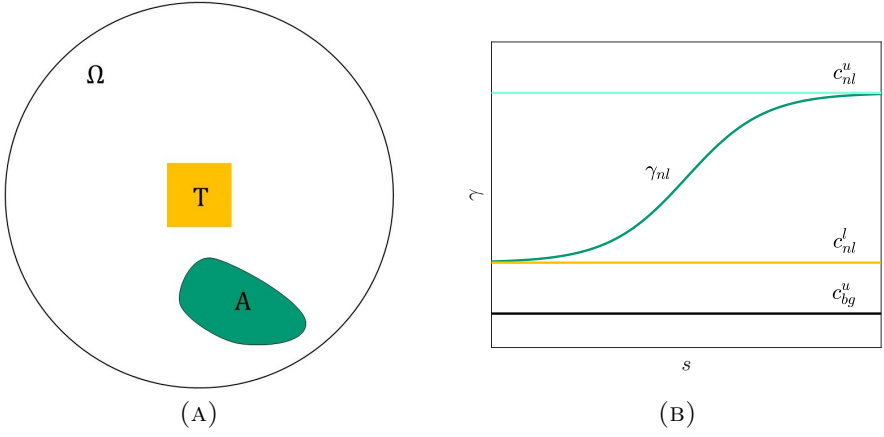

\centering
\subfloat[][]
{\includegraphics[width=.35\textwidth]{M03_TestEAn.jpg}} \quad
\subfloat[][]
{\includegraphics[width=.45\textwidth]{M02_Fig2_a.eps}}
\caption{Left: Test anomaly not included in $A$. Right: \textcolor{black}{Material properties involved.}}
\label{fig_04_fict}
\end{figure}

\subsubsection{Boundary data}
In this Section we compute the boundary data able to reveal whether $T \not \subseteq A$, under the assumption that $A \subseteq F$, where $F$ is given and $A$ unknown.

From \eqref{eqn:firstIneq2}, \eqref{eqn:secondIneq2} and \eqref{eqn:LinDtN} we have
\begin{equation}
\begin{split}
    \langle  \overline{\Lambda}_A(f)- \overline{\Lambda}_T(f),f \rangle & \leq
    \langle  \overline{\Lambda}_F^u f- \overline{\Lambda}_T^l f,f \rangle \\
    & \leq
    \frac{1}{2}\langle  \Lambda_F^u f- \Lambda_T^l f,f \rangle.    
\end{split}
\end{equation}
As a result, if there exists boundary data $f$ such that
\begin{equation}
    \langle  \Lambda_F^u f- \Lambda_T^l f,f \rangle < 0,
\end{equation}
then $f$ ensures
\begin{equation}
    \langle  \overline{\Lambda}_A(f)- \overline{\Lambda}_T(f),f \rangle<0.
\end{equation}
The operator $\Lambda_F^u- \Lambda_T^l$ is linear and, hence, the boundary data $f$ is an arbitrary element of the span of the eigenfunctions corresponding to negative eigenvalues of this operator.

We have thus proved the following results.
\begin{prop}\label{th:th1}
    Let $A\subset\Omega$ be an anomalous region characterized by the nonlinear \textcolor{black}{material property}~\eqref{eqn:mua}, assuming $c_{bg}^u < c_{nl}^l$. Let $T\subset\Omega$ be the region occupied by a test anomaly and let $F\subset\Omega$ be such that (i) $A\subseteq F$ and (ii) that $T \not\subseteq F$. If $\Lambda_F^u- \Lambda_T^l$ has negative eigenvalues, then any element $f\in X_{\diamond}(\partial\Omega)$ of the span of the eigenfunctions of the negative eigenvalues guarantees that
    \begin{equation}
        \label{ADtN_f_<0}
        \langle\overline{\Lambda}_{A}(f)-\overline{\Lambda}_T(f),f\rangle<0.
    \end{equation}
\end{prop}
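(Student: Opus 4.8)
The statement is a corollary of the Monotonicity Principle (Theorem~\ref{th:MP}) combined with the identity $\overline{\Lambda}=\frac{1}{2}\Lambda$ valid for linear coefficients, equation~\eqref{eqn:LinDtN}. The plan is to bound the nonlinear quadratic form $\langle\overline{\Lambda}_A(f)-\overline{\Lambda}_T(f),f\rangle$ from above by a genuinely quadratic form $\frac{1}{2}\langle(\Lambda_F^u-\Lambda_T^l)f,f\rangle$ associated with two \emph{linear} auxiliary problems, and then to extract negativity from the spectrum of the linear, self-adjoint operator $\Lambda_F^u-\Lambda_T^l$.

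First I would control $\langle\overline{\Lambda}_A(f),f\rangle$ from above. Since $A\subseteq F$ and $\gamma_F^u$ is the coefficient in~\eqref{eqn:mufl}, the pointwise inequality $\gamma_A\le\gamma_F^u$ holds: on $A$ one has $\gamma_A(x,s)=\gamma_{nl}(x,s)\le c_{nl}^u=\gamma_F^u(x)$ by (H3); on $F\setminus A$ one has $\gamma_A(x,s)=\gamma_{bg}(x)\le c_{bg}^u<c_{nl}^l\le c_{nl}^u=\gamma_F^u(x)$, which is where the separation hypothesis $c_{bg}^u<c_{nl}^l$ is used; and on $\Omega\setminus F$ the two coefficients coincide. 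Theorem~\ref{th:MP} then gives $\overline{\Lambda}_A\leqslant\overline{\Lambda}_F^u$, and since $\gamma_F^u$ is linear, \eqref{eqn:LinDtN} upgrades this to $\langle\overline{\Lambda}_A(f),f\rangle\le\frac{1}{2}\langle\Lambda_F^u f,f\rangle$ for every $f\in X_{\diamond}(\partial\Omega)$. Symmetrically, with $\gamma_T^l$ as in~\eqref{eqn:mut}, the inequality $\gamma_T\ge\gamma_T^l$ is immediate (on $T$ it reads $\gamma_{nl}(x,s)\ge c_{nl}^l$ by (H3), and on $\Omega\setminus T$ the two coefficients coincide), so Theorem~\ref{th:MP} yields $\overline{\Lambda}_T^l\leqslant\overline{\Lambda}_T$, hence, after the sign change, $-\langle\overline{\Lambda}_T(f),f\rangle\le-\frac{1}{2}\langle\Lambda_T^l f,f\rangle$, again by~\eqref{eqn:LinDtN}. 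Adding the two estimates gives
\[
    \langle\overline{\Lambda}_A(f)-\overline{\Lambda}_T(f),f\rangle\;\le\;\frac{1}{2}\langle(\Lambda_F^u-\Lambda_T^l)f,f\rangle\qquad\forall f\in X_{\diamond}(\partial\Omega),
\]
so if $\Lambda_F^u-\Lambda_T^l$ has a negative eigenvalue, then any nonzero $f$ in the span of the eigenfunctions associated with negative eigenvalues makes the right-hand side a combination $\sum_i\lambda_i|c_i|^2$ with every $\lambda_i<0$, hence strictly negative, which forces~\eqref{ADtN_f_<0}.

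I do not expect a serious obstacle: the argument is essentially the inequality chain already assembled in the paragraphs preceding the statement, so the work lies in the bookkeeping. The delicate points are keeping the direction of every inequality straight, in particular the sign reversal when passing from $\gamma_T\ge\gamma_T^l$ to $-\overline{\Lambda}_T\leqslant-\overline{\Lambda}_T^l$, and verifying the pointwise coefficient inequalities region by region, which is exactly where $c_{bg}^u<c_{nl}^l$ is needed. I would also note that hypothesis (ii), $T\not\subseteq F$, is what prevents the spectral assumption from being vacuous: if $T\subseteq F$, the same region-by-region check gives $\gamma_T^l\le\gamma_F^u$, hence $\Lambda_T^l\leqslant\Lambda_F^u$ by Theorem~\ref{th:MP}, so $\Lambda_F^u-\Lambda_T^l$ has no negative eigenvalue. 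Finally, to be scrupulous about the infinite-dimensional spectral step one invokes the standard fact, already used in the linear monotonicity method, that $\Lambda_F^u-\Lambda_T^l$ admits a discrete spectral decomposition on $X_{\diamond}(\partial\Omega)$; after discretization it is simply a real symmetric matrix.
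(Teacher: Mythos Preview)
Your proposal is correct and follows essentially the same approach as the paper: bound $\langle\overline{\Lambda}_A(f),f\rangle$ above by $\langle\overline{\Lambda}_F^u f,f\rangle$ via $\gamma_A\le\gamma_F^u$ and Theorem~\ref{th:MP}, bound $-\langle\overline{\Lambda}_T(f),f\rangle$ above by $-\langle\overline{\Lambda}_T^l f,f\rangle$ via $\gamma_T\ge\gamma_T^l$, then use~\eqref{eqn:LinDtN} and the negative spectrum of $\Lambda_F^u-\Lambda_T^l$. Your region-by-region verification of the pointwise inequalities and your observation on the role of hypothesis~(ii) are welcome additions that the paper leaves implicit.
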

In other words, the boundary data of Proposition~\ref{th:th1} guarantees the possibility to reveal that $T$ is not inclued in $A$ and, therefore, that it does not contribute to the set union appearing in reconstruction formula \eqref{eqn:alg}.

% \begin{proof}
%     From~\eqref{eqn:MP2}, since $A\subseteq F$, then $\mu_A\leq \mu_F$ and $\overline{\Lambda}_A\leq\overline{\Lambda}_F$. Furthermore, since $\mu_{NL}\leq\mu_M$, it follows that $\mu_F\leq\mu_{F,l}$ and $\overline{\Lambda}_F\leq\overline{\Lambda}_{F,l}$.

%     Hence
%     \begin{displaymath}
%           \langle  \overline{\Lambda}_A(f)- \overline{\Lambda}_T(f),f \rangle \leq
%     \langle  \overline{\Lambda}_F(f)- \overline{\Lambda}_T(f),f \rangle \leq
%     \langle  \overline{\Lambda}_{F,l}(f)- \overline{\Lambda}_T(f),f \rangle\: \forall f\in X_{\diamond}(\partial\Omega).
%     \end{displaymath}

%     We can, then, conclude
%     \begin{displaymath}
%         \langle  \overline{\Lambda}_{F,l}(f)- \overline{\Lambda}_T(f),f \rangle<0 \Longrightarrow  \langle  \overline{\Lambda}_A(f)- \overline{\Lambda}_T(f),f \rangle<0
%     \end{displaymath}
      
% \end{proof}

Proposition \ref{th:th1} is significant because it shifts the problem from an iterative one (see \eqref{eqn:iter_prob}), requiring proper measurement to be carried out during the minimization process, to a noniterative one, where only the eigenvalues of a proper linear operator $\Lambda_F^u- \Lambda_T^l$ are required. This operation affords a significant advantage, as we can perform all the calculations before the measurement process, which is a fundamental requirement to achieve real-time performance, and the applied boundary potentials during the measurement process are a priori known.
%Furthermore, we prove that the problem of determine the boundary potential verifying~\eqref{eqn:MP3} can be reduced to a linear problem.

\begin{comment}
In the following we formalize the basic idea presented above. Let us introduce the magnetic permeability related to the fictitious anomaly $F$
\begin{equation}\label{eqn:muf}
    \mu_F(x,H)=\begin{cases}
        \mu_{NL}(x,H) & \text{in $F$}\\
        \mu_{BG}(x) & \text{in $\Omega\setminus F$}
    \end{cases}
\end{equation}
and its linear version
\begin{equation}\label{eqn:mufl}
    \mu_{F,l}(x,H)=\begin{cases}
        \mu_M & \text{in $F$}\\
        \mu_{BG}(x) & \text{in $\Omega\setminus F$}.
    \end{cases}
\end{equation}
We term as $\overline{\Lambda}_F$ and $\overline{\Lambda}_{F,l}$ the \emph{average} DtNs corresponding to $\mu_F$ and $\mu_{F,l}$, respectively.
\end{comment}

\subsubsection{The Imaging method revisited}
At this stage, one last issue remains to be faced: the design of the fictitious anomalies $F$. Specifically, for each test anomaly $T$, we have to introduce a set of fictitious anomalies $\{F_j\}_j$ such that if $T\nsubseteq A$, then $A$ is completely included in at least one of $F_j$. This issue is treated in a dedicated Section (see Section \ref{sec:fian}
).

Summing up, the proposed method involves the following steps
\begin{itemize}
    \item Once the geometry of the domain $\Omega$ is available, a set of test anomalies $\{T_i\}_i$ is defined.
    \item A set of fictitious anomalies $\{F_{i,j}\}_j$ is defined for each test anomaly $T_i$. Furthermore, given \textcolor{black}{$\gamma_{nl}$ and $\gamma_{bg}$}, a set of test boundary potentials $\{f_{i,j,k}\}_k$ is selected from the span of the eigenfunctions corresponding to negative eigenvalues of $\Lambda_{F_{i,j}}^u-\Lambda_{T_i}^l$.
    \item The values of the operators $\overline{\Lambda}_{T_i}$, evaluated on the potentials $f_{i,j,k}$, are pre-computed and stored once and for all, before any measurement. 
    \item The operator $\overline{\Lambda}_A$ is evaluated on the boundary potentials $f_{k,i}$, for each $i$, $j$ and $k$.
    \item For each test anomaly $T_i$, the values of the differences 
    \[
    \langle\overline{\Lambda}_A(f_{i,j,k})-\overline{\Lambda}_{T_i}(f_{i,j,k}),f_{i,j,k}\rangle
    \]
    are computed. If, given $i$, there exists $j$ and $k$ such that 
    \[
    \langle\overline{\Lambda}_{A}(f_{i,j,k})-\overline{\Lambda}_{T_i}(f_{i,j,k}),f_{i,j,k}\rangle<0,
    \]then the test anomaly $T_i$ is not included in the reconstruction $A^U_D$ of $A$.
    \item The reconstruction is given by the union of test anomalies $T_i$ such that the difference $\langle\overline{\Lambda}_{A}(f_{i,j,k})-\overline{\Lambda}_{T_i}(f_{i,,j,k}),f_{i,j,k}\rangle$ is positive for each $j$ and $k$, i.e.
    \begin{equation}\label{eqn:r0}
        A^U_D=\bigcup_i \{T_i | \langle\overline{\Lambda}_{A}(f_{i,j,k}),f_{i,j,k}\rangle-\langle\overline{\Lambda}_{T_i}(f_{i,j,k}),f_{i,j,k}\rangle\geq0 \ \forall\,j,k\}.
    \end{equation}
\end{itemize}

\begin{rem}
The proposed approach is designed to be compatible with real-time applications. Indeed, the computational cost related to the operations following the measurement process, is given by only the comparison between the pre-computed and stored values of $\langle\overline{\Lambda}_{T_i}(f_{i,j,k}),f_{i,j,k}\rangle$ and the values $\langle\overline{\Lambda}_{A}(f_{i,j,k}),f_{i,j,k}\rangle$ coming from the measurements. Furthermore, the number of required measurements grows linearly with the number of test anomalies.
\end{rem}
%It results that the number of measurements is comparable with the ones needed in the linear case (where the computation of the inductance matrix is needed).
\begin{comment}
The result of this subsection can be extended, with the same arguments, to the case 
\begin{equation}\label{eqn:hy2}
    \mu_{BG}(x)>\mu_M \quad \text{for a.e. in $\overline{\Omega}$}.
\end{equation}
\end{comment}
\begin{rem}
    Computing the test boundary potentials $f_{i,j,k}$ only requires the knowledge of \textcolor{black}{$\gamma_{bg}(x)$, $c_{nl}^u$ and $c_{nl}^l$}. In other words, we do not need to explicitly know the behaviour of \textcolor{black}{$\gamma_{nl}$} in order to carry out reconstructions, but only its upper and lower bounds.
\end{rem}
%When neither \textcolor{black}{$c_{nl}^l>c_{bg}^u$ nor $c_{nl}^u < c_{bg}^l$}, some slightly modifications are needed in computing the test boundary potentials and they are described in details in the next subsection.

\subsection{Nonlinear magnetic permeability intersecting the background permeability}
\label{sec:advid}
Here the case when $c_{nl}^l < c_{bg}^u < c_{nl}^u$, shown in Figure \ref{fig_05_perm} is treated. The test anomaly for this case is given in \eqref{eqn:t2}.
\begin{figure}[htp]
    \centering
    \includegraphics[width=\textwidth]{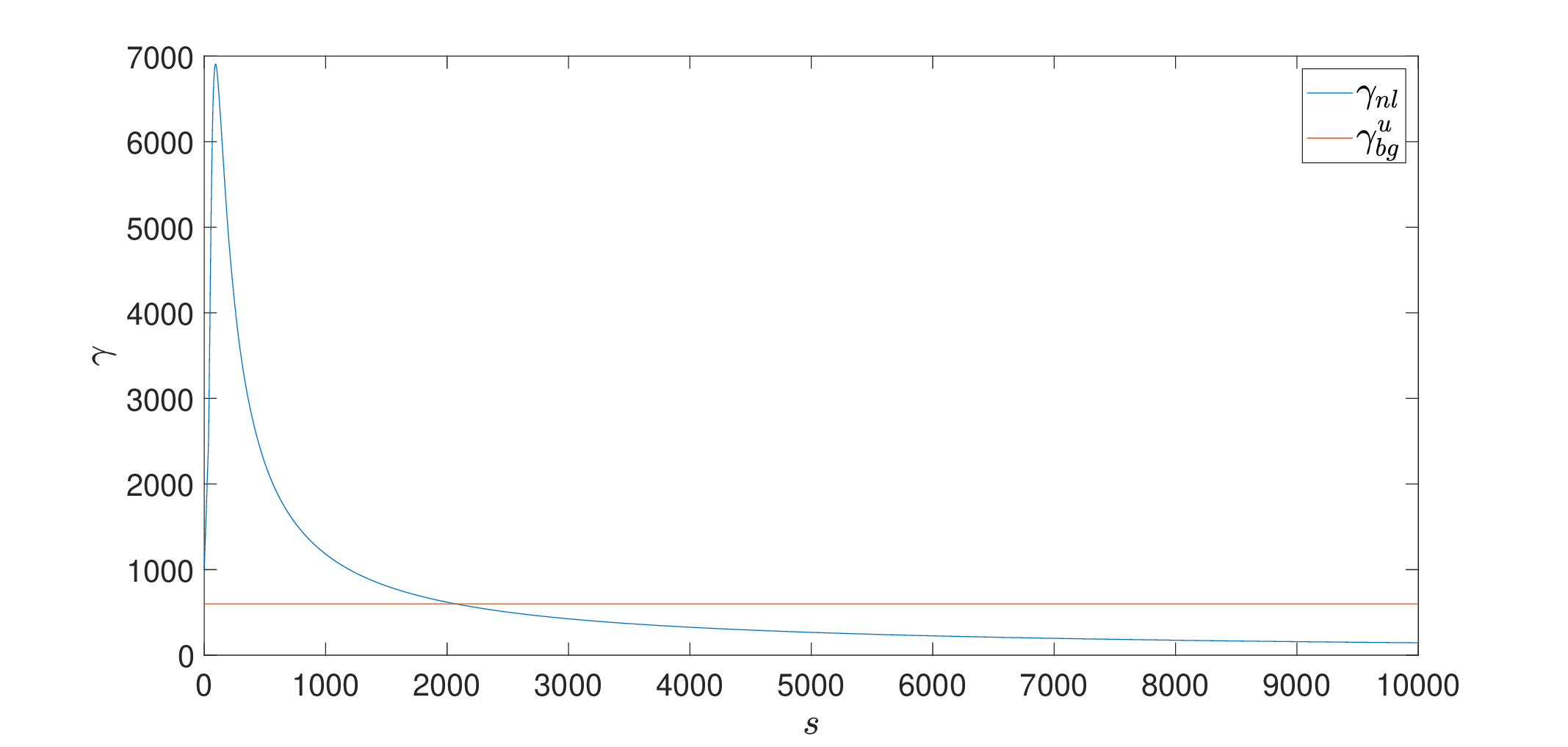}
    \caption{\textcolor{black}{Not well separated material properties. The non linear material property $\gamma$ considered here is the relative magnetic permeability of M$330\mhyphen50$ Electrical Steel. The horizontal axis corresponds to the magnetic field magnitude $H$, i.e. $s=H/(1 \text{A/m})$.}}
    \label{fig_05_perm}
\end{figure}

%In the following, for the sake of clarity, we consider an homogeneous background material, i.e. \textcolor{black}{$\gamma_{bg}(x)=\gamma_{bg}$} and a nonlinear anomaly of the form \textcolor{black}{$\gamma_{nl}(x,s)=\gamma_{nl}(s)$}. However, the results of this subsection can be extended with minor changes to the general case when the background and/or the nonlinear material are not spatially uniform.

The focus of this Section, in line with Section \ref{sec:basid}, is to evaluate a negative upper bound to $\langle \overline{\Lambda}_A(f)-\overline{\Lambda}_T(f),f\rangle$. In doing this, the upper bound of $\langle \overline{\Lambda}_A(f),f\rangle$ can be found with the same arguments as Section \ref{sec:basid}, while the upper bound for $\langle -\overline{\Lambda}_T(f),f\rangle$, which involves a redefined \textcolor{black}{material property $\gamma_T$}, requires a new treatment.

In Section~\ref{sec:basid}, a convenient upper bound to $\langle -\overline{\Lambda}_T(f),f\rangle$ is obtained by replacing the nonlinear \textcolor{black}{material described by $\gamma_T$} with the linear one described by \textcolor{black}{$\gamma_T^l$}. The key requirement to make this replacement is that the \textcolor{black}{$\gamma_T$} has to be linear, if restricted to $\Omega\setminus T$. This is not the case of~\eqref{eqn:t2}, thus making the problem more complex. 
\begin{comment}
    The magnetic permeability related to the test anomaly is nonlinear. This has a relevant impact. Indeed, one of the main advantages of the method proposed in the previous subsection is that the test boundary potentials can be obtained by linear problems. In order to ensure this property, we introduce a linear magnetic permeability for test anomalies. 

    In the case of intersecting background magnetic permeability and nonlinear magnetic permeability, this is not possible, since magnetic permeabilities in the form of~\eqref{eqn:mutnl} are the only ones possible to ensure that $\mu_T\leq\mu_A \Longleftrightarrow T\subseteq A$. For this reason, we have to slightly modify the method for this case.
\end{comment}

\textcolor{black}{In the following, for the sake of clarity, we consider a nonlinear anomaly of the form $\gamma_{nl}(x,s)=\gamma_{nl}(s)$. In any case, the results of this subsection are valid also when the nonlinear material is not spatially uniform.}

\textcolor{black}{Let us consider the two material properties of Figure~\ref{fig_05_perm}. Let $s_0$ be the intersection between $\gamma_{nl}$ and $\gamma_{bg}$. $s_0$ divides the horizontal axis into two regions: one to the left of the intersection point $s_0$, and one to the right of $s_0$. In each region the material properties are ordered, i.e. either $\gamma_{nl} > c_{bg}^u$ or $\gamma_{nl} < c_{bg}^u$.}

\textcolor{black}{
The na\"ive idea to treat the case of intersecting material properties, entails driving the system at a \emph{sufficiently} small boundary potentials so that $s<s_0$. Consequently, the material properties are ordered and it is possible to proceed as in the previous subsection. Indeed, if $s_M$ is a positive value lower than $s_0$, then, for $s\in[0,s_M]$, the lower bound $\gamma_l$ to the nonlinear material property $\gamma_{nl}$ is strictly greater than $c_{bg}^u$. Then, the desired upper bound to $\langle \overline{\Lambda}_T(f),f\rangle$ is $\langle \overline{\Lambda}_T^l f,f\rangle$, where $\overline{\Lambda}_T^l$ is the average DtN operator related to the linear material property}
\begin{equation}\label{eqn:mut5}
    \textcolor{black}{\gamma_{T}^l(x)=\begin{cases}
        \gamma_l & \text{in $T$}, \\
        \gamma_{bg}(x) & \text{in $\Omega\setminus T$},
    \end{cases}}
\end{equation}
\textcolor{black}{and $\gamma_l$ is equal to $\min_{s\in[0,s_M]}\gamma_{nl}(s)$ in this case}.

%\textcolor{black}{Unfortunately, the existence of such \lq\lq small\rq\rq \ boundary potential is not guaranteed for all the configurations of interest. On the other hand, it is possible to prove that the linear material property $\mu_T^l$ (and consequently the linear operator $\overline{\Lambda}_T^l$), as defined above, can be still used to evaluate the desired potential revealing if $T\not\subseteq A$.}

\textcolor{black}{In the following we prove (see Proposition \ref{th:th3}) that $\overline{\Lambda}_T^l$ is essential to provide the potentials to reveal that $T\not\subseteq A$ when $A \subset F$.}

\textcolor{black}{Before stating the main result, a preliminary lemma is proven which ensures that as the amplitude of the applied boundary potential is sufficiently small, $\langle \overline{\Lambda}_T(f),f\rangle$ approaches a value greater than or equal to $\langle \overline{\Lambda}_{T}^l f,f\rangle$.}
\begin{comment}
    On the other hand, it is possible to prove (see Lemma~\ref{th:th2_v} in the following), that it is always possible to find a \emph{sufficiently small} amplitude for the boundary data in order to guarantee
    \begin{equation}\label{eqn:conk}
        \textcolor{black}{\langle \overline{\Lambda}_T(f), f\rangle\geq  \langle \overline{\Lambda}_{T}^l(f),f\rangle},  
    \end{equation}

    \textcolor{black}{Condition~\eqref{eqn:conk} represents the required bound to the test anomaly term, in order to have a linear material property for the test anomaly also and proceed as in the previous subsection.}
\end{comment}

\begin{lem}\label{th:th2_v}
    \textcolor{black}{Let $c_{bg}^u$ be the upper bound to the linear material property $\gamma_{bg}$ and $\gamma_{nl}(s)$ be a nonlinear material property satisfying (H1)-(H4). Let $0<s_M<s_0$ and} 
    \begin{equation*}
            \textcolor{black}{
            \gamma_l=\min_{s\in[0,s_M]}\gamma_{nl}(s).} 
    \end{equation*}
    \textcolor{black}{Let $\mu_T$ and $\mu_{T}^l$ be defined as in~\eqref{eqn:t2} and in~\eqref{eqn:mut5}, and let $\overline\Lambda_T$, $\Lambda_T$, $\overline\Lambda_T^l$, $\Lambda_T^l$ be the corresponding average DtN and classical DtN, then}
    \begin{equation*}
        \lim_{\lambda\to 0^+}\frac{\langle\overline{\Lambda}_T(\lambda f),\lambda f\rangle}{\lambda^2} \geq \langle \overline{\Lambda}_{T}^l f,f\rangle \quad \forall f\in X_\diamond(\partial\Omega).    
    \end{equation*}
\end{lem}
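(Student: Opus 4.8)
The plan is to rescale the boundary datum, identify the small--amplitude limit of the rescaled state by a monotonicity argument, and then compare two linear Dirichlet energies. First I would pass from DtN operators to energies: a short computation (see also \cite{art:Co21}) shows that for any admissible material property $\gamma$ and any $g\in X_\diamond(\partial\Omega)$,
\[
\langle\overline{\Lambda}_\gamma(g),g\rangle=\mathbb{E}_\gamma(u_g)=\int_\Omega Q_\gamma(x,|\nabla u_g|)\,dx ,
\]
which follows by integrating $\frac{d}{d\alpha}\mathbb{E}_\gamma(u_{\alpha g})=\langle\Lambda_\gamma(\alpha g),g\rangle$ over $\alpha\in[0,1]$ and using $\mathbb{E}_\gamma(0)=0$; in particular, for the linear property $\gamma_T^l$, by \eqref{eqn:LinDtN},
\[
\langle\overline{\Lambda}_T^l f,f\rangle=\tfrac12\langle\Lambda_T^l f,f\rangle=\min\Big\{\tfrac12\!\int_\Omega\gamma_T^l(x)|\nabla v|^2\,dx:\ v\in H^1(\Omega),\ v|_{\partial\Omega}=f\Big\}.
\]
Writing $u_{\lambda f}=\lambda\,w_\lambda$, so that $w_\lambda|_{\partial\Omega}=f$ and $w_\lambda$ weakly solves $\dive\big(\gamma_T(x,\lambda|\nabla w_\lambda|)\nabla w_\lambda\big)=0$, the bound $\mathbb{E}_{\gamma_T}(u_{\lambda f})\le\mathbb{E}_{\gamma_T}(\lambda\hat v)$ for a fixed $H^1$--extension $\hat v$ of $f$, combined with (H3), bounds $\|\nabla w_\lambda\|_{L^2(\Omega)}$ uniformly in $\lambda$; hence, along a subsequence, $w_\lambda\rightharpoonup w_0$ in $H^1(\Omega)$ with $w_0|_{\partial\Omega}=f$.

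Next I would identify $w_0$. Since $\lambda|\nabla w_\lambda|=|\nabla u_{\lambda f}|\to0$ in $L^2(\Omega)$, after passing to a further subsequence $\gamma_T(x,\lambda|\nabla w_\lambda(x)|)\to\gamma_{T,0}(x):=\lim_{s\to0^+}\gamma_T(x,s)$ for a.e.\ $x$, and one passes to the limit in the weak formulation (the coefficient times $\nabla\varphi$ converges strongly in $L^2$ by dominated convergence, $\nabla w_\lambda$ converges weakly) to see that $w_0$ is the \emph{unique} solution of the linear problem $\dive(\gamma_{T,0}\nabla w_0)=0$, $w_0|_{\partial\Omega}=f$; by uniqueness the whole family converges. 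It is here that the hypotheses $0<s_M<s_0$ and $\gamma_l=\min_{[0,s_M]}\gamma_{nl}>c_{bg}^u\ge\gamma_{bg}$ are used: they force $\min(\gamma_{bg}(x),\gamma_{nl}(s))=\gamma_{bg}(x)$ for $s$ small, so that $\gamma_{T,0}=\gamma_{nl}(0)$ on $T$ and $\gamma_{T,0}=\gamma_{bg}$ on $\Omega\setminus T$; in particular $\gamma_T^l\le\gamma_{T,0}$ (on $T$ one has $\gamma_l\le\gamma_{nl}(0)$, and equality holds on $\Omega\setminus T$).

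Then I would upgrade to strong convergence. Testing the equations for $w_\lambda$ and $w_0$ against $w_\lambda-w_0\in H^1_0(\Omega)$, splitting $\gamma_T(x,\lambda|\nabla w_\lambda|)\nabla w_\lambda-\gamma_{T,0}\nabla w_0$ through $\gamma_T(x,\lambda|\nabla w_0|)\nabla w_0$, and using (H4) — whose constant $\kappa$ is scale invariant, hence applies to $\xi\mapsto\gamma_T(x,\lambda|\xi|)\xi$ — gives
\[
\kappa\,\|\nabla w_\lambda-\nabla w_0\|_{L^2(\Omega)}\le\big\|\big(\gamma_T(\cdot,\lambda|\nabla w_0|)-\gamma_{T,0}\big)\nabla w_0\big\|_{L^2(\Omega)}\longrightarrow 0\qquad(\lambda\to0^+),
\]
so $w_\lambda\to w_0$ strongly in $H^1(\Omega)$, and $|\nabla w_\lambda|^2\to|\nabla w_0|^2$ in $L^1(\Omega)$. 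Since $\mathbb{E}_{\gamma_T}(u_{\lambda f})/\lambda^2=\int_\Omega\lambda^{-2}Q_{\gamma_T}(x,\lambda|\nabla w_\lambda|)\,dx$, with $\lambda^{-2}Q_{\gamma_T}(x,\lambda t)\le\tfrac{c_{nl}^u}{2}t^2$ and $\lambda^{-2}Q_{\gamma_T}(x,\lambda t)\to\tfrac12\gamma_{T,0}(x)t^2$ as $\lambda\to0^+$, a Vitali-type convergence argument gives $\lim_{\lambda\to0^+}\langle\overline{\Lambda}_T(\lambda f),\lambda f\rangle/\lambda^2=\mathbb{E}_{\gamma_{T,0}}(w_0)$. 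Finally, the elementary monotonicity of the linear Dirichlet energy in its coefficient, together with $\gamma_T^l\le\gamma_{T,0}$, yields $\mathbb{E}_{\gamma_{T,0}}(w_0)\ge\langle\overline{\Lambda}_T^l f,f\rangle$, which is the assertion.

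The main obstacle is to rule out concentration of $|\nabla w_\lambda|^2$ as $\lambda\to0^+$: a bare $H^1$ bound is not enough, and this is precisely what the monotonicity (Minty--Browder) step delivers, through strong $H^1$ convergence of $w_\lambda$ — the crucial structural input being the strong monotonicity (H4), which survives the rescaling $\xi\mapsto\lambda\xi$. (An alternative is a uniform Meyers higher--integrability estimate for the rescaled equations, combined with the elementary pointwise bound $Q_{\gamma_T}(x,s)\ge\tfrac12\gamma_T^l(x)s^2-\tfrac{C}{2}s^2\mathbf{1}_{\{s>s_M\}}$.) Everything else is bookkeeping with the definitions and with linear Dirichlet energies; one should, however, verify at the outset that $\gamma_T$ in \eqref{eqn:t2} itself satisfies (H1)--(H4) — notably that $\xi\mapsto\min(\gamma_{bg}(x),\gamma_{nl}(|\xi|))\xi$ is strongly monotone — since this underlies the direct problem, the energy identity, and the monotonicity estimate used above.
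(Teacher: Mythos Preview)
Your argument is correct and follows the same overall strategy as the paper: identify the small-amplitude limit of $\lambda^{-2}\langle\overline{\Lambda}_T(\lambda f),\lambda f\rangle$ with the Dirichlet energy of the \emph{linear} problem whose coefficient is $\gamma_{T,0}(x)=\lim_{s\to 0^+}\gamma_T(x,s)$ (which is exactly the paper's $\gamma^{lim}$), and then apply the Monotonicity Principle together with $\gamma_T^l\le\gamma_{T,0}$. The difference is in how the limit is obtained. The paper simply invokes \cite[Th.~5.1, eq.~(5.9)]{corboesposito2023thep0laplacesignature} (applied with $B=\Omega$, $A=\emptyset$) to get the equality $\lim_{\lambda\to 0^+}\lambda^{-2}\langle\overline{\Lambda}_T(\lambda f),\lambda f\rangle=\langle\overline{\Lambda}^{lim}f,f\rangle$ in one line, and then concludes by $\gamma^{lim}\ge\gamma_T^l$. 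You instead give a self-contained proof of that limit: rescale $u_{\lambda f}=\lambda w_\lambda$, get uniform $H^1$ bounds from (H3), pass to the limit in the weak formulation to identify $w_0$, upgrade to strong $H^1$ convergence via the strong monotonicity (H4) (nicely exploiting its scale invariance), and finish with a Vitali-type argument for the energies. What your route buys is independence from the external reference and a transparent mechanism for the limit; what the paper's route buys is brevity. Your caveat that $\gamma_T$ in \eqref{eqn:t2} must itself satisfy (H1)--(H4) is pertinent and is an implicit standing assumption in the paper as well (it is needed already for the well-posedness of the direct problem with coefficient $\gamma_T$).
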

\begin{proof}
\textcolor{black}{Let $\gamma^{lim}$ be the limiting ($s \to 0$) material property defined as}
    \begin{equation*}
    \textcolor{black}{\gamma^{lim}(x)=\begin{cases}
        \gamma_{nl}(0) & \text{in $T$}, \\
        \gamma_{bg}(x) & \text{in $\Omega\setminus T$},
    \end{cases}}
    \end{equation*}
    \textcolor{black}{and let $\overline{\Lambda}^{lim}$ be the related average DtN operator.}    
    \textcolor{black}{A straightforward consequence of equation (5.9) in \cite[Th. 5.1]{corboesposito2023thep0laplacesignature} applied with the following setting: $B=\Omega$, $A=\emptyset$, $Q_B(x,H)=\int_0^H\gamma_T(x,\xi)\xi\,d\xi$ is}
    \begin{equation*}
        \begin{split}
            \lim_{\lambda\to 0^+}\frac{\langle\overline{\Lambda}_T(\lambda f),\lambda f\rangle}{\lambda^2} & =\lim_{\lambda\to 0^+}\frac{1}{2}\frac{\langle{\Lambda}_T(\lambda f),\lambda f\rangle}{\lambda^2} \\
            & =\langle \overline{\Lambda}^{lim} f,f\rangle\geq \langle \overline{\Lambda}_T^{l} f,f\rangle \quad \forall f\in X_\diamond(\partial\Omega).
        \end{split}     
    \end{equation*}
    \textcolor{black}{The second inequality follows from $\gamma^{lim}\geq\gamma_T^l$ and the Monotonicity Principle of Theorem \ref{th:MP}.}
\end{proof}

%\textcolor{black}{We are now in position to state the main result of this section. It ensures that the approach of the previous section can be applied, apart for minor changes, to the new case of interest also.}

The following Proposition proves that the linear operator $\Lambda_F^u- \Lambda_T^l$ provides the proper boundary potentials to reveal that $T\not\subseteq A$ when (i) $A \subset F$ and (ii) $T \not\subseteq F$.

\begin{prop}
\label{th:th3}
Let $A\subset\Omega$ be an anomalous region characterized by the nonlinear material property~\eqref{eqn:mua}. Let $T\subset\Omega$ be the region occupied by a test anomaly and let $F\subset\Omega$ be such that (i) $A\subseteq F$ and (ii) that $T \not\subseteq F$. Let $\overline{\Lambda}_{T}^l$ %and $X_T(\partial\Omega)$
be defined as in Lemma~\ref{th:th2_v}. 
If $\Lambda_F^u- \Lambda_T^l$ has negative eigenvalues, then for any element $f$ in the %intersection between $X_T(\partial\Omega)$ and the 
span of the eigenfunctions of the negative eigenvalues, \textcolor{black}{there exist a constant $\lambda^0_f>0$ such that}
\begin{equation}
\label{eqn:testcon}
    \textcolor{black}{\langle\overline{\Lambda}_{A}(\lambda f)-\overline{\Lambda}_T(\lambda f),\lambda f\rangle<0 \quad \forall\lambda<\lambda^0_f}.
\end{equation}
\end{prop}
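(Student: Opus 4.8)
The plan is to combine the quadratic upper bound for $\langle \overline{\Lambda}_A(\lambda f),\lambda f\rangle$ coming from the inclusion $A\subseteq F$ (exactly as in Section~\ref{sec:basid}) with the asymptotic lower bound for $\langle \overline{\Lambda}_T(\lambda f),\lambda f\rangle$ provided by Lemma~\ref{th:th2_v}, and then to use that $f$ lies in the negative eigenspace of the linear operator $\Lambda_F^u-\Lambda_T^l$.

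\textbf{Step 1: upper bound for the $A$-term.} Since $A\subseteq F$, inequality~\eqref{eqn:ineq} gives $\gamma_A\le\gamma_F^u$ with $\gamma_F^u$ a \emph{linear} material property. By the Monotonicity Principle (Theorem~\ref{th:MP}) together with~\eqref{eqn:LinDtN}, and using the linearity of $\Lambda_F^u$ to extract the factor $\lambda^2$, one gets for every $\lambda>0$
\begin{equation*}
    \langle \overline{\Lambda}_A(\lambda f),\lambda f\rangle \;\le\; \langle \overline{\Lambda}_F^u(\lambda f),\lambda f\rangle \;=\; \tfrac{1}{2}\lambda^2\,\langle \Lambda_F^u f,f\rangle .
\end{equation*}
This is an exact inequality, valid for all $\lambda$, which is what makes the passage to the limit below possible.

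\textbf{Step 2: the limit $\lambda\to 0^+$.} Divide $\langle \overline{\Lambda}_A(\lambda f)-\overline{\Lambda}_T(\lambda f),\lambda f\rangle$ by $\lambda^2>0$. Using Step~1 for the $A$-term and Lemma~\ref{th:th2_v} for the $T$-term (which yields $\liminf_{\lambda\to0^+}\lambda^{-2}\langle\overline{\Lambda}_T(\lambda f),\lambda f\rangle\ge\langle\overline{\Lambda}_T^l f,f\rangle=\tfrac12\langle\Lambda_T^l f,f\rangle$, the last equality by~\eqref{eqn:LinDtN} since $\gamma_T^l$ in~\eqref{eqn:mut5} is linear), we obtain
\begin{equation*}
    \limsup_{\lambda\to 0^+}\frac{\langle \overline{\Lambda}_A(\lambda f)-\overline{\Lambda}_T(\lambda f),\lambda f\rangle}{\lambda^2}
    \;\le\; \tfrac12\langle\Lambda_F^u f,f\rangle-\tfrac12\langle\Lambda_T^l f,f\rangle
    \;=\; \tfrac12\big\langle(\Lambda_F^u-\Lambda_T^l)f,f\big\rangle .
\end{equation*}

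\textbf{Step 3: conclusion.} Since $f$ lies in the span of the eigenfunctions associated with the negative eigenvalues of the self-adjoint operator $\Lambda_F^u-\Lambda_T^l$, expanding $f$ in an orthonormal eigenbasis shows $\langle(\Lambda_F^u-\Lambda_T^l)f,f\rangle<0$ for every $f\neq0$. Hence the $\limsup$ in Step~2 is strictly negative, so there exists $\lambda^0_f>0$ with $\lambda^{-2}\langle \overline{\Lambda}_A(\lambda f)-\overline{\Lambda}_T(\lambda f),\lambda f\rangle<0$ for all $0<\lambda<\lambda^0_f$; multiplying by $\lambda^2>0$ gives~\eqref{eqn:testcon}. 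The only delicate point — and the reason the statement is local in $\lambda$ rather than global — is the control of the test-anomaly term: unlike in Section~\ref{sec:basid}, $\gamma_T$ in~\eqref{eqn:t2} is nonlinear on $\Omega\setminus T$, so one cannot bound $\langle\overline{\Lambda}_T(\lambda f),\lambda f\rangle$ below by a fixed quadratic form uniformly in $\lambda$; the quadratic lower bound $\tfrac12\langle\Lambda_T^l f,f\rangle$ is recovered only asymptotically as $\lambda\to0^+$, which is exactly the content of Lemma~\ref{th:th2_v} (obtained from the small-amplitude expansion of the Dirichlet energy in \cite{corboesposito2023thep0laplacesignature}).
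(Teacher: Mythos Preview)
Your proof is correct and follows essentially the same route as the paper: the upper bound for the $A$-term via $A\subseteq F$ and the Monotonicity Principle, the asymptotic lower bound for the $T$-term from Lemma~\ref{th:th2_v}, and negativity of $\langle(\Lambda_F^u-\Lambda_T^l)f,f\rangle$ on the negative eigenspace. The only cosmetic difference is that the paper unpacks the $\limsup$ conclusion into an explicit $\varepsilon=\alpha|c_0|$ argument (with $c_0=\langle\overline{\Lambda}_F^u f-\overline{\Lambda}_T^l f,f\rangle$), while you invoke the $\limsup$ directly.
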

\begin{proof}
\textcolor{black}{Let $\varphi_k$ be the eigenfunctions of $\Lambda_F^u- \Lambda_T^l$ related to the negative eigenvalues $\delta_k$ and let us consider an element $f$ in the span of $\{\varphi_k\}_k$, i.e.}
\begin{equation*}
\textcolor{black}{
    f(x)=\sum_k \beta_k \varphi_k(x).}
\end{equation*}
\textcolor{black}{By the linearity of $\Lambda_F^u- \Lambda_T^l$, it follows that}
\begin{equation}\label{eqn:eig_th}
\textcolor{black}{
    \langle \overline{\Lambda}_F^u f -\overline{\Lambda}_T^l f,f\rangle=\frac{1}{2} \sum_k \delta_k \beta_k^2  \lVert \varphi_k \rVert^2=:c_0<0,}
\end{equation}
\textcolor{black}{where $\lVert \varphi_k \rVert^2=\langle \varphi_k,\varphi_k \rangle$.}

\textcolor{black}{From Lemma \ref{th:th2_v}, we know that }    
\begin{equation}
\label{eqn:lim}
\textcolor{black}{
    \lim_{\lambda\to 0^+}\frac{\langle\overline{\Lambda}_T(\lambda f),\lambda f\rangle}{\lambda^2}  \geq\langle \overline{\Lambda}_{T}^l f,f\rangle,}
\end{equation}
\textcolor{black}{while from~\eqref{eqn:firstIneq2}}
\begin{equation*}
\textcolor{black}{
    \frac{\langle\overline{\Lambda}_A(\lambda f),\lambda f\rangle}{\lambda^2}  \leq \frac{\langle\overline{\Lambda}_F^u(\lambda f),\lambda f\rangle}{\lambda^2}  = \langle \overline{\Lambda}_{F}^u f,f\rangle.}
\end{equation*}

\textcolor{black}{
Let $\varepsilon=\alpha \lvert c_0 \rvert > 0$, with $0<\alpha<1$. From \eqref{eqn:lim} it turns out that there exists $\lambda^0_f > 0$ such that 
    \begin{equation}\label{eqn:liminf}
        \frac{\langle\overline{\Lambda}_T(\lambda f),\lambda f\rangle}{\lambda^2}  \geq\langle \overline{\Lambda}_{T}^l f,f\rangle - \varepsilon \quad \forall\, \lambda < \lambda_f^0.
    \end{equation}
Therefore, it follows that}
\begin{equation}\label{eqn:inprop}
\textcolor{black}{
    \frac{\langle\overline{\Lambda}_{A}(\lambda f)-\overline{\Lambda}_T(\lambda f),\lambda f\rangle}{\lambda^2} \leq \frac{\langle\overline{\Lambda}_{F}^u(\lambda f)-\overline{\Lambda}_T^l(\lambda f),\lambda f\rangle}{\lambda^2}+\varepsilon.
    }
\end{equation}
\textcolor{black}{Combining~\eqref{eqn:inprop} and~\eqref{eqn:eig_th}}
\begin{equation*}
\textcolor{black}{
    \frac{\langle\overline{\Lambda}_{A}(\lambda f)-\overline{\Lambda}_T(\lambda f),\lambda f\rangle}{\lambda^2} \leq \frac{\langle\overline{\Lambda}_{F}^u(\lambda f)-\overline{\Lambda}_T^l(\lambda f),\lambda f\rangle}{\lambda^2} + \varepsilon=c_0+\alpha \lvert c_0 \rvert <0.}
\end{equation*}
\end{proof}

\begin{rem}
\textcolor{black}{
    The proper boundary potential to be applied in order to reveal if $T \not\subseteq A$ is not $f$ but, rather, its scaled version $\lambda f$ as appears from \eqref{eqn:testcon}.}
\end{rem}

\begin{rem}
    \textcolor{black}{
    Proposition \ref{th:th3} and Lemma \ref{th:th2_v} provide an algorithmic method to evaluate a proper value for the scaling factor $\lambda$, to be applied in the front of $f$ in \eqref{eqn:testcon}.
    Specifically, it is first necessary to compute the quantity $c_0$ as defined in \eqref{eqn:eig_th}, for a specific boundary data $f$ of the span of the eigenvectors of $\Lambda_F^u- \Lambda_T^l$ with negative eigenvalues. Then, $\varepsilon$ is set to be equal to $\alpha \lvert c_0 \rvert$, with $0<\alpha<1$ and, starting from an initial guess, the scaling factor $\lambda$ is decreased until \eqref{eqn:liminf} is satisfied.}

\begin{comment}
    From a practical standpoint the proper value for the scaling factor $\lambda$ can be identified by an iterative process. From Lemma~\ref{th:th2_v} it follows
    \begin{equation}\label{eqn:inrem}
        \frac{\langle\overline{\Lambda}_T(\lambda f),\lambda f\rangle}{\lambda^2}  \geq\langle \overline{\Lambda}_{T}^l(f),f\rangle - \varepsilon \quad \forall\, \lambda < \lambda_f^0.
    \end{equation}
    Furthermore, in the proof of Proposition~\ref{th:th3}, a proper value for $\varepsilon$ is set. For the sake of clarity, let us suppose that $f$ is a single eigenfunction related to a negative eigenvalue $\delta_k$, hence $\varepsilon=\alpha\lvert\delta_k\rvert\lVert f \rVert^2$, with $0<\alpha<1$.
    Given a starting value $\overline{\lambda}$, the quantity $\langle\overline{\Lambda}_T(\overline{\lambda} (f), \overline{\lambda} f\rangle/\overline{\lambda}^2$ is evaluated. If condition~\eqref{eqn:inrem} is not met, then the value of $\overline{\lambda}$ is decreased until the desired bound is obtained.
\end{comment}
    \textcolor{black}{
    Since, the material properties $\gamma_T^l$ and $\gamma_T$ are known, the calculations in \eqref{eqn:liminf} can be carried out before the measurement process, with no impact on the real-time capability of the method.} 
\end{rem}

\begin{rem}
\textcolor{black}{
    The method proposed can be applied without any modification also to the case of multiple intersections between the nonlinear characteristic $\gamma_{nl}$ and the upper bound $c_{bg}^u$ to the linear characteristic $\gamma_{bg}$. Indeed, in this case the value $s_0$ represents the minimum abscissa for which $\gamma_{nl}(s)=c_{bg}^u$.}
\end{rem}

\subsection{Choice of the fictitious anomalies}
\label{sec:fian}
This Section addresses the problem of designing the fictitious anomalies $F$. Their design is related to (i) the shape of a test anomaly $T$ and (ii) to the kind of defect details it is required to reconstruct. Although the fictitious anomalies can be chosen arbitrarily, we suggest a systemic approach to achieve excellent performance levels with a minimum computational cost. 

Specifically, for rectangular test anomalies, the tangent fictitious anomalies represented in Figure~\ref{fig_06_anf} can be utilized. 
\begin{figure}[htp]
\centering
\subfloat[][]
{\includegraphics[width=.3\textwidth]{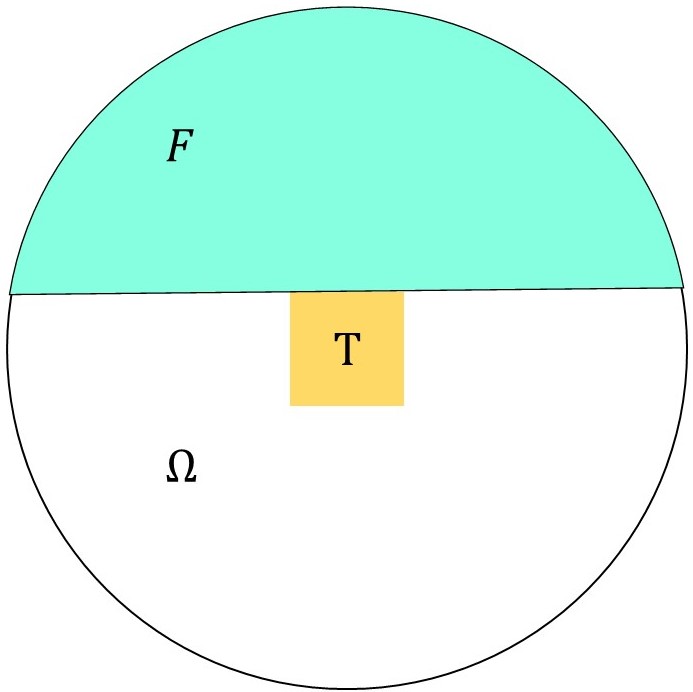}} \quad
\subfloat[][]
{\includegraphics[width=.3\textwidth]{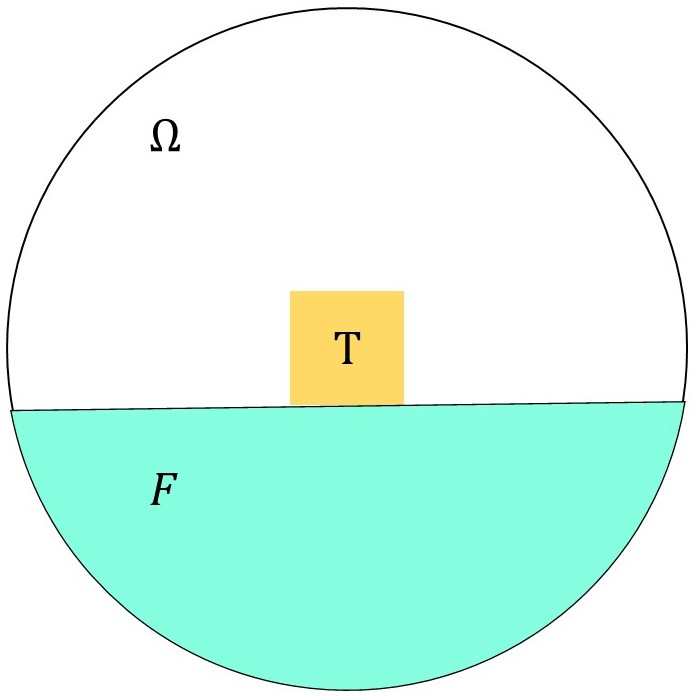}} \\
\subfloat[][]
{\includegraphics[width=.3\textwidth]{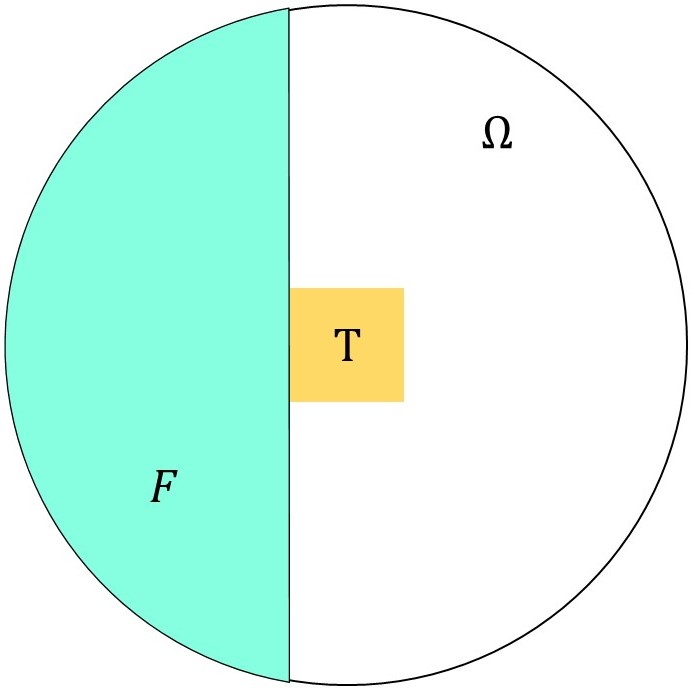}} \quad
\subfloat[][]
{\includegraphics[width=.3\textwidth]{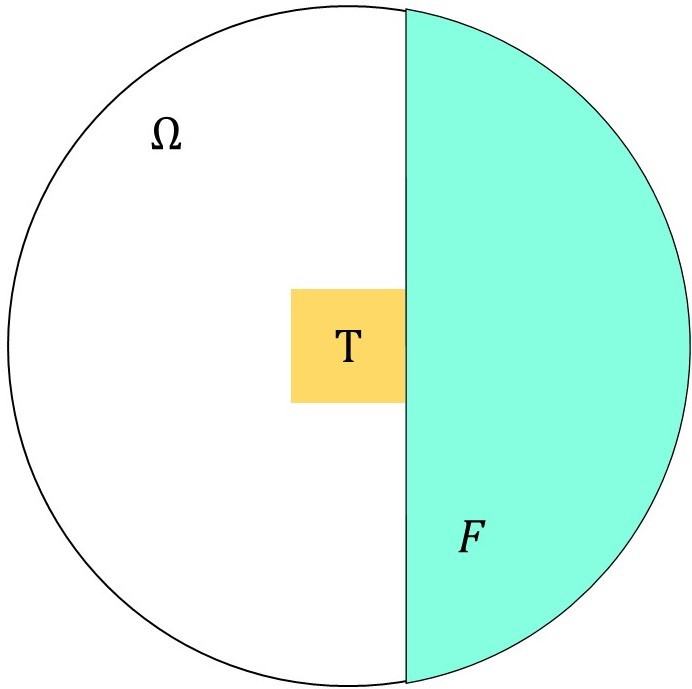}}
\caption{Example of fictitious anomalies $F$ utilized to determine the boundary potentials.}
\label{fig_06_anf}
\end{figure}
This idea can be extended to general shaped test anomalies such as, for instance, circles. In this case, a tangent line on the boundary of $T$ is selected. The tangent line divides the domain $\Omega$ into two parts. The fictitious anomaly is represented by the part that does not contain $T$. Therefore, for each test anomaly $T$, a certain number of boundary points are chosen, and for each point a fictitious anomaly can be evaluated via the tangent line, as in Figure~\ref{fig_07_fict2}.

\begin{figure}[htp]
\centering\includegraphics[width=0.325\textwidth]{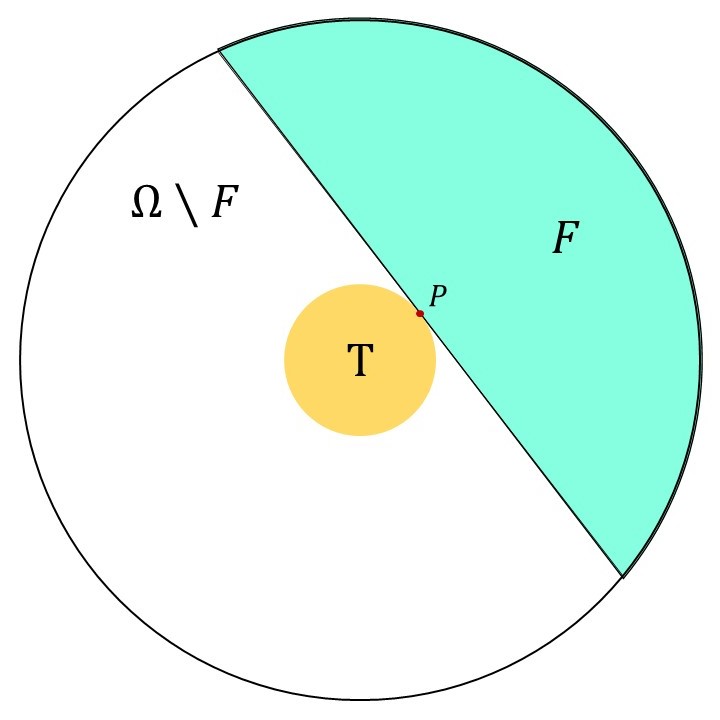}
    \caption{A fictitious anomaly $F$ for a convex test anomaly $T$: the case of a circle.}
    \label{fig_07_fict2}
\end{figure}

To reconstruct concave defects, such as L-shaped and C-shaped defects, it is necessary to introduce some other fictitious anomalies. Figure~\ref{fig_08_fict3} shows possible fictitious anomalies related to a squared test anomaly, and aimed at the treatment of concave unknown anomalies $A$.
\begin{figure}[htp]
\centering
\subfloat[][]
{\includegraphics[width=.3\textwidth]{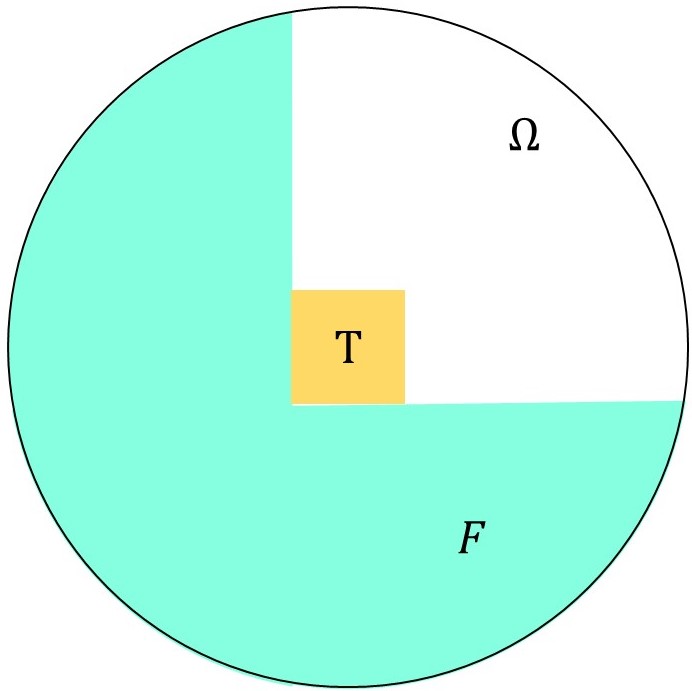}} \quad
\subfloat[][]
{\includegraphics[width=.3\textwidth]{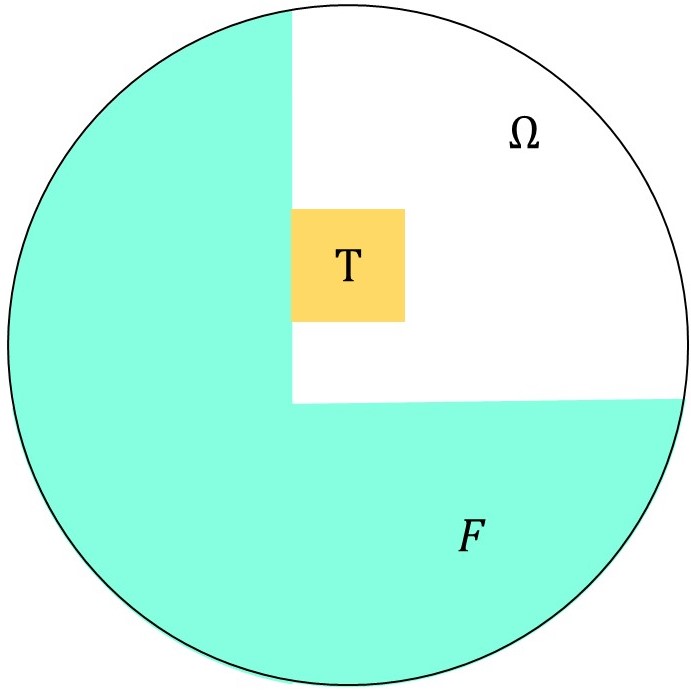}} \\
\subfloat[][]
{\includegraphics[width=.3\textwidth]{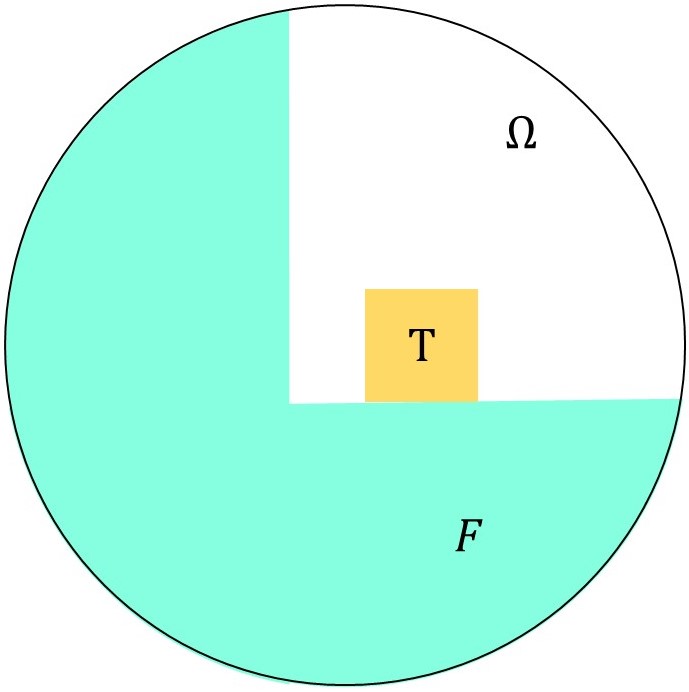}} 
\caption{Example of anomalies $F$ for concave defects.}
\label{fig_08_fict3}
\end{figure}
The extension for general (convex) shaped test anomalies can be achieved by introducing two tangent lines, as shown in Figure \ref{fig_09_fict4}. Each tangent line divides the domain $\Omega$ into two parts. The fictitious anomaly is the union of the two regions that do not contain $T$.
\begin{figure}[htp]
    \centering
    \includegraphics[width=0.325\textwidth]{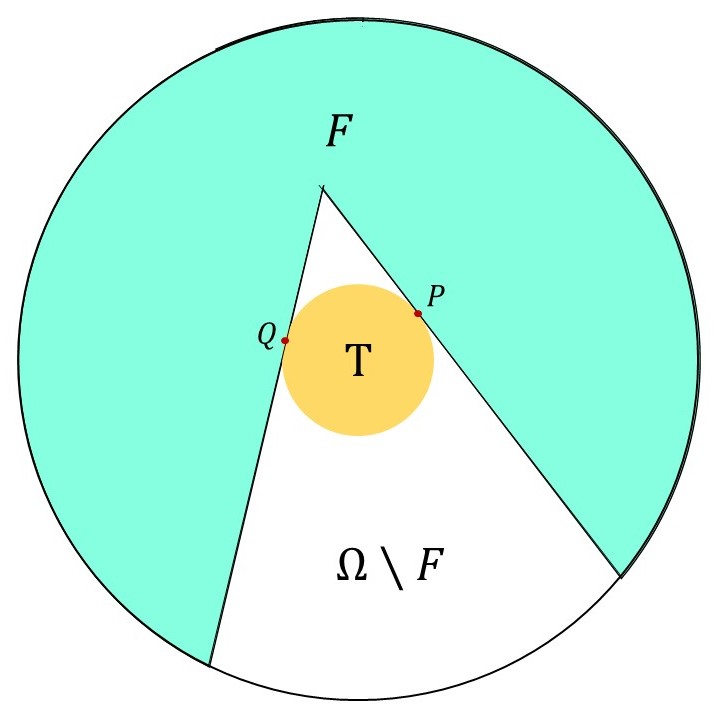}
    \caption{Fictitious anomaly $A_i$ for an arbitrarily shaped test anomaly $T$ and concave defects}
    \label{fig_09_fict4}
\end{figure}

%The feasibility of fictitious anomalies has to be evaluated on the basis of the minimum eigenvalue $\lambda_m$ of $\Lambda_{F}^u-\Lambda_T^l$. The two fundamental parameters are the desired resolution and the noise level. As it is typical in Monotonicity Principle based methods, when the dimension of the test anomalies become smaller with respect to the dimension of the anomaly $A$, verifying condition~\eqref{eqn:MP3} becomes more difficult. Hence, the first step, once the dimension of test anomalies has been fixed, is to check if, for all the fictitious anomalies, the minimum value of eigenvalues $\lambda_m$ is negative. The second step is to compare $\lambda_m$ to the noise level. If the absolute value of $\lambda_m$ is too small with respect to the noise level, it is necessary to drop off some fictitious anomalies, as the L-shaped ones, or to choose bigger test anomalies. 

\subsection{Limit of the proposed method}\label{sec:lim}
In this Section the limits of the proposed method are highlighted. The foundation of the method is its ability to reveal when a test anomaly $T$ is \emph{not} contained in $A$.

In order to reveal whether $T \nsubseteq A$, a key aspect is the choice of $f$, the prescribed (test) boundary value. In Sections \ref{sec:basid} and \ref{sec:advid}, $f$ was found under the assumption that the unknown anomaly $A$ is contained in the fictitious anomaly $F$, and that the test anomaly $T$ is not contained in $F$.

However, there are  situations where $T \nsubseteq A$ and the latter condition $\left( A \subseteq F \right)$ is not satisfied, as shown in Figure~\ref{fig_10_lim}.
\begin{figure}[htp]
   \centering
   \subfloat[][]
   {\includegraphics[width=.3\textwidth]{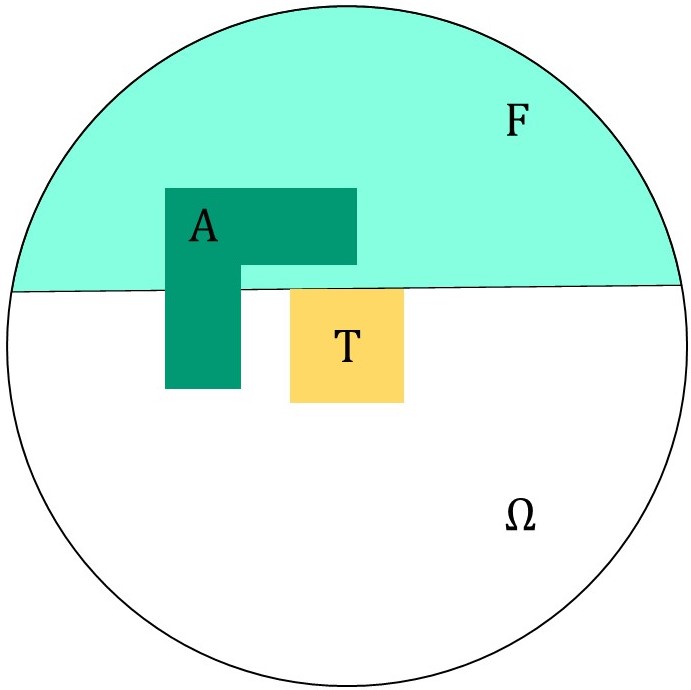}} \quad
   \subfloat[][]
   {\includegraphics[width=.3\textwidth]{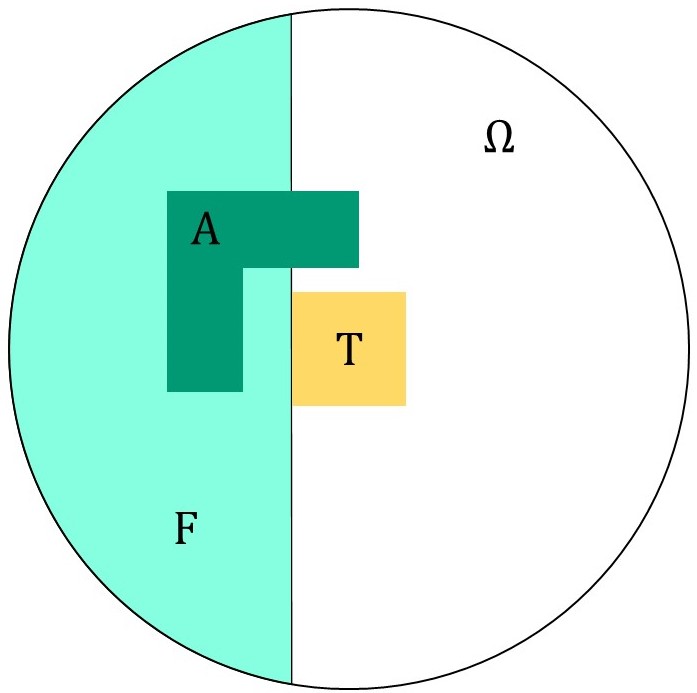}} \\
   \caption{The actual $A$ is not completely included in any fictitious anomalies $F$.}
   \label{fig_10_lim}
\end{figure}
In these cases $\left( A \not\subseteq F \right)$, it is not possible to guarantee that the boundary potentials arising from the methods of Sections \ref{sec:basid} and \ref{sec:advid}, are able to reveal that $T\not\subseteq A$. 

However, this limit can be easily mitigated because: (i) we only need $A$ to be contained in at least one of the sets of type $F$, when $A \nsubseteq T$ and (ii) we may design large domains $F$ to reduce the risk of $A \not\subseteq F$ (see Figure~\ref{fig_08_fict3}). A detailed analysis of this aspect is beyond the scope of this work.

%Another limit is that the set of boundary potentials $\left\{f_{i,j,k} \right\}_{j,k}$ appearing in \ref{eqn:r0} and designed for the anomaly $T_i$, does not include the right one to reveal that $T_i \not\subseteq A$, or that $\overline{\Lambda}_T \leqslant \overline{\Lambda}_A$ despite $T \not\subseteq A$. 
 
\section{Numerical Examples}\label{sec5}
In this section we report some numerical examples of reconstructions in $\mathbb{R}^2$, \textcolor{black}{for the magnetostatic and the steady currents cases. The domain $\Omega$ is a circle, with radius equal to $r=30\,\text{cm}$ in the magnetostatic case and with radius equal to $r=3\,\text{cm}$ in the steady currents case.} All the reconstructions are obtained by adding synthetic noise to the simulated data generated via a FEM code. The constitutive relationship are those of Figures \ref{fig_11_sigmae} and \ref{fig_12_perm2}.

\begin{figure}[htb]
    \centering
    \includegraphics[width=1\textwidth]{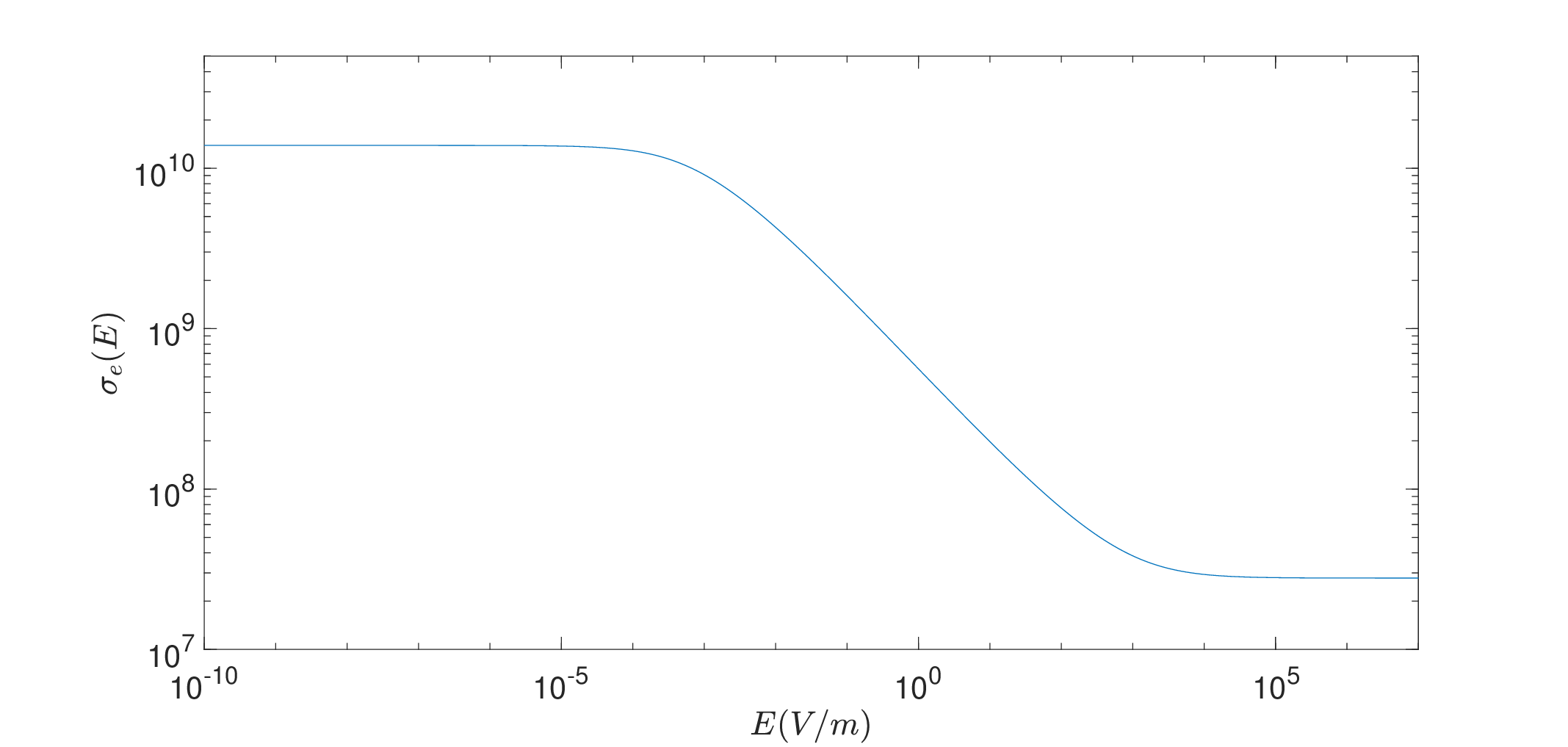}
    \caption{Effective electrical conductivity $\sigma_e$. The lower bound is $\sigma_l=2.7861\times 10^7\,\text{S/m}$ and the upper bound is $\sigma_u=1.3875\times 10^{10}\,\text{S/m}$.}
    \label{fig_11_sigmae}
\end{figure}
\begin{figure}[htp]
    \centering
    \includegraphics[width=1\textwidth]{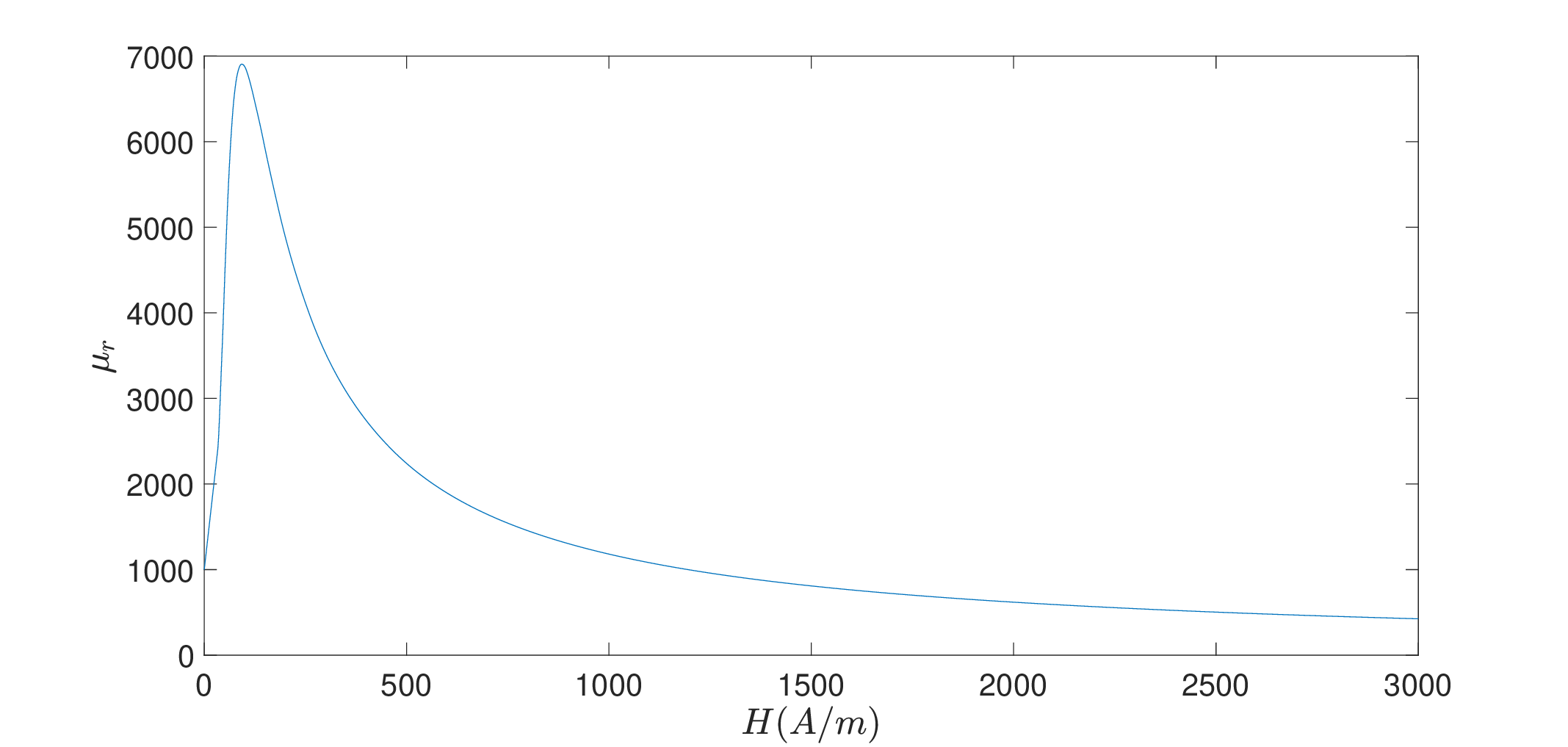}.
    \caption{Nonlinear (relative) magnetic permeability of $M330\mhyphen50A$ Electrical Steel by Surahammars Bruk AB~\cite{magperm}.}
    \label{fig_12_perm2}
\end{figure}

In order to deal with noisy measurements, the reconstruction rule introduced in~\eqref{eqn:r0} has to be slightly modified, in line with \cite{art:Ha15, art:Ta16_1}.

\subsection{Treatment of the noise}
\textcolor{black}{The measurement system measures $\langle\overline{\Lambda}_{A}(f),f\rangle$, which corresponds to the Dirichlet energy for a prescribed boundary data $f$. From the physical standpoint, the Dirichlet energy is either the magnetic co-energy (magnetostatic case), or the electric co-energy (electrostatic case) or the average Ohmic power (steady currents case). 
}

\textcolor{black}{A transducer converts the measured quantity $\langle\overline{\Lambda}_{A}(f),f\rangle$ into a voltage $\OM_A(f)$:
\begin{equation}
\label{eqn:trns}
    \OM_A(f)=k \langle\overline{\Lambda}_{A}(f),f\rangle,
\end{equation}
where $k$ is the transducer constant. Then, $\OM_A(f)$
is measured by means of a multimeter. To be specific on the noise levels, it is assumed that the multimeter is the 2002 8\textonehalf-Digit High Performance Multimeter by Keithley~\cite{web:in}. The noise model is, therefore, given by}

\begin{equation*}
    \textcolor{black}{\OMtilde_A(f)=\OM_A(f)(1+\eta_1\xi_1)+\eta_2\xi_2L}, 
\end{equation*}
\textcolor{black}{where $\OMtilde_A(f)$ is the noisy version of $\OM_A(f)$, $\xi_1$ and $\xi_2$ are two random variables uniformly distributed in the interval $[-1,1]$, $L$ is the measurement range and $\eta_1$ and $\eta_2$ are two parameters giving the noise level amplitude (see \cite{web:in}). The noise consists of two distinct terms: one controlled by $\eta_1$, proportional to the measured quantity, and another controlled by $\eta_2$, proportional to the selected measurement range.}
%The measurement range $L$ is a function of the measured quantity. In Table~\ref{tab:l}, the behaviour of the measurement range with respect to the measured quantity is reported. Due to the presence of the transducer, the measured quantity is the Dirichlet Energy (expressed in Watt for the steady-state currents case and in Joule for the magnetostatic case), while the ranges are voltages (expressed in Volt).}
% \begin{table}[htb]
% \centering
% \begin{tabular}{ccccc}
% \hline \hline
%  & \multicolumn{2}{c}{Steady-state currents} & \multicolumn{2}{c}{Magnetostatic} \\  \hline
% $L$ (V)  & Starting value (W)     & End value (W) & Starting value (J) & End value (J) \\ \hline
% 0.2 & $6.67\times 10^{-4}$ & $1.33\times 10^{5}$ & $1.43\times 10^{-16}$ & $2.86\times 10^{-8}$    \\
% 2   & $1.33\times 10^{5}$  & $1.33\times 10^{6}$ & $2.86\times 10^{-8}$  & $2.86\times 10^{-7}$    \\
% 20  & $1.33\times 10^{6}$  & $1.33\times 10^{7}$ & $2.86\times 10^{-7}$  & $2.86\times 10^{-6}$     \\ \hline \hline
% \end{tabular}
% \caption{Behaviour of the measurement range $L$ with respect to the measured Dirichlet Energy.}
% \label{tab:l}
% \end{table}

\textcolor{black}{
We have the following Proposition.
\begin{prop}
\label{prop:noisetreat}
    Given $0 \le  \eta_1 < 1$, $\eta_2 \ge 0$ and $L \ge 0$, $\left|\xi_1\right| \le 1$ and $\left|\xi_2\right| \le 1$, we have
    \begin{equation}
    \label{eqn:MPNoisy}
    T \subseteq A \implies \OM_T(f) \le \frac{\OMtilde_A(f)+\eta_2 L}{1-\eta_1}, \, \forall f \in X_{\diamond}(\partial\Omega).
    \end{equation}
\end{prop}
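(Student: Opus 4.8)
The plan is to start from the Monotonicity Principle in the form \eqref{eqn:MP11}, namely $T \subseteq A \implies \overline{\Lambda}_T \leqslant \overline{\Lambda}_A$, which by Definition~\ref{def:def3} means $\langle \overline{\Lambda}_T(f),f\rangle \le \langle \overline{\Lambda}_A(f),f\rangle$ for all $f \in X_{\diamond}(\partial\Omega)$. Applying the transducer relation \eqref{eqn:trns} (and assuming $k>0$, so the inequality is preserved), this immediately gives $\OM_T(f) \le \OM_A(f)$ for all admissible $f$. So the entire content of the Proposition is to bound the true value $\OM_A(f)$ from above by the stated expression in the noisy measurement $\OMtilde_A(f)$.

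The second step is to invert the noise model $\OMtilde_A(f) = \OM_A(f)(1+\eta_1\xi_1) + \eta_2\xi_2 L$ to isolate $\OM_A(f)$. Solving, $\OM_A(f) = \dfrac{\OMtilde_A(f) - \eta_2\xi_2 L}{1+\eta_1\xi_1}$. Now I would use the bounds $|\xi_1|\le 1$ and $|\xi_2|\le 1$ together with $0\le\eta_1<1$, $\eta_2\ge 0$, $L\ge 0$. Since $1+\eta_1\xi_1 \ge 1-\eta_1 > 0$, the denominator is positive and bounded below by $1-\eta_1$. For the numerator, $-\eta_2\xi_2 L \le \eta_2 L$, so $\OMtilde_A(f) - \eta_2\xi_2 L \le \OMtilde_A(f) + \eta_2 L$. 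One must be slightly careful about the sign of the numerator when dividing by the smallest value of the denominator: if $\OMtilde_A(f)+\eta_2 L \ge 0$ then replacing $1+\eta_1\xi_1$ by its lower bound $1-\eta_1$ only increases the fraction, giving $\OM_A(f) \le \dfrac{\OMtilde_A(f)+\eta_2 L}{1-\eta_1}$. The case where the numerator could be negative needs a word: physically $\OM_A(f) = k\langle\overline{\Lambda}_A(f),f\rangle \ge 0$ (the Dirichlet energy is nonnegative and $k>0$), which forces $\OMtilde_A(f)-\eta_2\xi_2 L \ge 0$, and hence $\OMtilde_A(f)+\eta_2 L \ge \OMtilde_A(f)-\eta_2\xi_2 L \ge 0$; so the numerator is indeed nonnegative and the replacement is legitimate.

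Chaining the two steps yields, whenever $T\subseteq A$,
\begin{equation*}
\OM_T(f) \le \OM_A(f) = \frac{\OMtilde_A(f)-\eta_2\xi_2 L}{1+\eta_1\xi_1} \le \frac{\OMtilde_A(f)+\eta_2 L}{1-\eta_1},
\end{equation*}
which is exactly \eqref{eqn:MPNoisy}, completing the proof.

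I do not expect any serious obstacle here; the argument is essentially a one-line application of the Monotonicity Principle followed by an elementary manipulation of the affine noise model. The only point requiring any care is the direction of the inequality when bounding the fraction, i.e. checking that the numerator is nonnegative so that shrinking the denominator to $1-\eta_1$ gives an upper bound rather than a lower bound; this is handled by the nonnegativity of the Dirichlet energy (equivalently of $\OM_A$). A minor implicit assumption worth flagging is $k>0$ (positivity of the transducer constant), which is needed both to pass from the DtN inequality to the $\OM$ inequality and to conclude $\OM_A \ge 0$.
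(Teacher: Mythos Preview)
Your argument is correct and follows essentially the same approach as the paper: apply the Monotonicity Principle to get $\OM_T(f)\le\OM_A(f)$, and manipulate the affine noise model using $|\xi_1|,|\xi_2|\le 1$ to bound $\OM_A(f)$ by $(\OMtilde_A(f)+\eta_2 L)/(1-\eta_1)$. In fact you are more careful than the paper, which states the inequality \eqref{eqn:rn} without commenting on the sign of the numerator; your observation that $\OM_A(f)\ge 0$ (nonnegativity of the Dirichlet energy, with $k>0$) is precisely what makes the bound go in the right direction.
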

\begin{proof}
     Since $\left|\xi_1\right| \le 1$ and $\left|\xi_2\right| \le 1$,
\begin{equation}\label{eqn:rn}
     \OM_A(f) \leq \frac{\OMtilde_A(f)+\eta_2 L}{1-\eta_1}.
\end{equation}
Finally, equation \eqref{eqn:MPNoisy} follows since $T \subseteq A$ implies $\OM_T(f) \le \OM_A(f), \,\forall f \in X_{\diamond}(\partial\Omega)$, because of the Monotonicity Principle.
\end{proof}}
%Proposition \ref{prop:noisetreat} means that if a test domain $T$ is included in $A$, it is recognized as such even in the presence of noise, by checking if the r.h.s. of \eqref{eqn:MPNoisy} is true.
Consequently, the reconstruction rule of \eqref{eqn:r0} is changed as follows
\begin{equation}\label{eqn:rn0}
\textcolor{black}{
        \widetilde{A}_D^U=\bigcup_i \left\{T_i \:\bigg |\: \frac{\OMtilde_A(f_{i,j,k})+\eta_2 L}{1-\eta_1} - \OM_T(f_{i,j,k}) \geq 0 \quad\forall\,j,k \right\}}.
\end{equation}
Proposition \ref{prop:noisetreat} and the reconstruction rule of \eqref{eqn:rn0}, guarantee that a test anomaly $T_i$ contained in the actually anomaly $A$ is not discarded in the presence of noise, in line with \cite{art:Ha15, art:Ta16_1}. Moreover, this proves that if $A$ can be represented by the union of test anomalies, then $A \subseteq \widetilde{A}_D^U$ when noise is present.

\subsection{Numerical Model}
\textcolor{black}{The numerical simulations have been carried out with an in-house Finite Element Method. The finite element mesh discretization of $\Omega$, for both the magnetostatic and steady currents cases, consists of $N_t=16512$ triangular first-order elements and $N_p=8385$ nodes. $\partial\Omega$ is discretized in $N_b=256$ elements. The scalar potential $u$ is discretized as $u(x) = \sum_{i=1}^{N_{i}} x_i \varphi_i(x)$, where $N_{i}=N_p-N_{ext}$ is the number of internal nodes, the $x_i$s are the Degrees of Freedom and the $\varphi_i$s are first-order nodal shape functions.}
\begin{comment}
\begin{figure}[htp]
    \centering
    \includegraphics[width=1\textwidth]{Mesh.eps}
    \caption{\textcolor{black}{The finite element mesh employed in the numerical simulations. The number of DOFs is $N_p=8385$, the number of elements is $N_t=16512$.}}
    \label{fig:mesh}
\end{figure}    
\end{comment}

\textcolor{black}{The numerical model for solving \eqref{eqn:dirint}, based on the Galerkin method, is}
\begin{equation}\label{eqn:dirintnum}
    \textcolor{black}{\sum_{i=1}^{N_{int}} x_i\int_{\Omega}\gamma\left(x,\left\lvert \sum_{i=1}^{N_{i}} x_i \nabla\varphi_i(x) \right\rvert\right)\nabla \varphi_i(x)\cdot \nabla\varphi_j(x)\,dx=0\quad j=1,\hdots, N_{i}.}
\end{equation}
\textcolor{black}{The system of nonlinear equations \eqref{eqn:dirintnum} is numerically solved by means of the Newton-Raphson method.}

\textcolor{black}{The boundary data $f$ is numerically approximated by a piecewise linear function. Since it has zero average, it can been represented by $N_{b}-1$ DoFs. In this discrete setting, the DtN and the average DtN, act from $\mathbb{R}^{N_{b}-1}$ to $\mathbb{R}^{N_{b}-1}$.}

\subsection{Numerical results}
\textcolor{black}{In this section, the proposed imaging method is validated via numerical simulations. Different shapes of different type of the unknown anomaly $A$ are considered, in two different physical settings: steady currents and magnetostatic. Figures~\ref{fig_13_rec} and~\ref{fig_14_rec2} show the reconstructions, obtained for a total of $2195$ boundary potentials applied for each configuration. The estimated anomaly $\widetilde{A}_D^U$ is in white, while the boundary of the unknown region $A$ is marked in red. The values for $\eta_1$ and $\eta_2$, representing the noise level parameters, are those for the 2002 8\textonehalf-Digit High Performance Multimeter by Keithley. The transducer constant $k$ of \eqref{eqn:trns} is chosen in order to provide an output voltage in the range from $200\,\text{mV}$ to $20\,\text{V}$. Specifically, $k=1\times 10^{-2}\,\text{V$\cdot$m/W}$ and $k=7\times 10^6\,\text{V$\cdot$ m/J}$ for the steady currents and magnetostatic cases, respectively. In this range, the multimeter guarantees $\eta_1$ and $\eta_2$ of the order of $10^{-6}$ (see Table~\ref{tab_02_eta} for details).}

\begin{figure}[htp]
    \centering
    \subfloat[][\emph{Circle}]
    {\includegraphics[width=.3\textwidth]{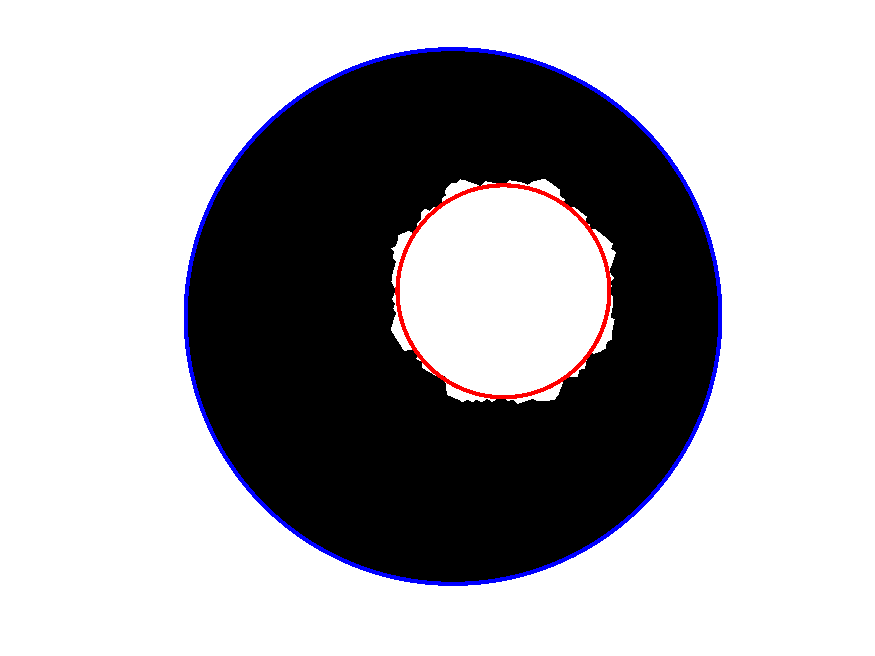}} \quad
    \subfloat[][\emph{Ellipse}]
    {\includegraphics[width=.3\textwidth]{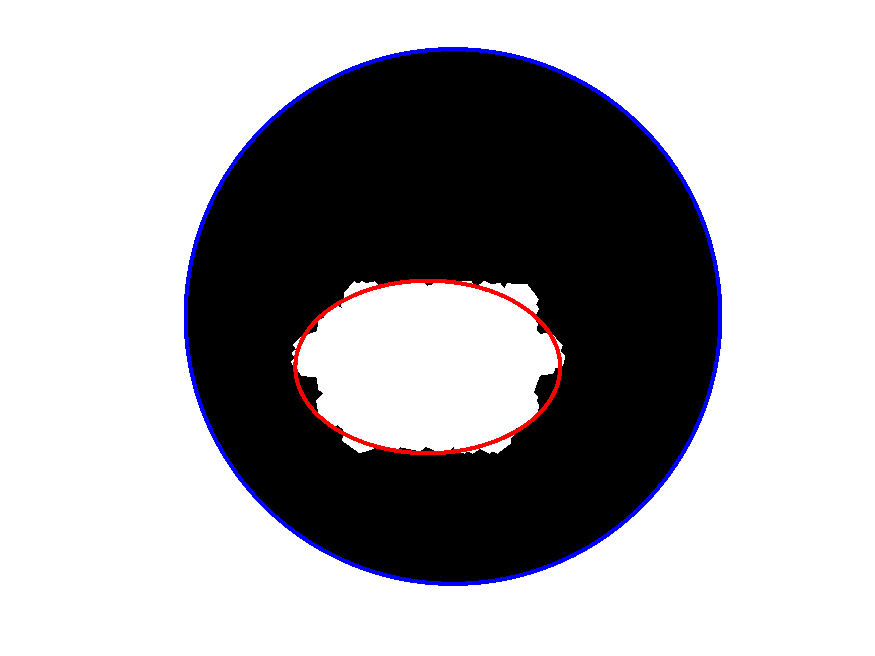}} \quad
    \subfloat[][\emph{Peanut}]
    {\includegraphics[width=.3\textwidth]{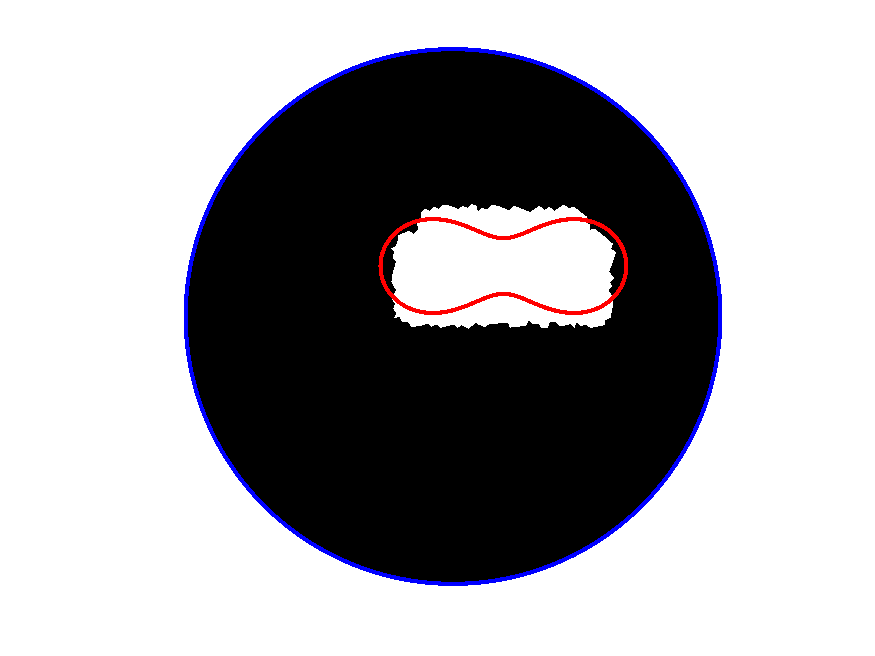}} \\
    \subfloat[][\emph{Droplet}]
    {\includegraphics[width=.3\textwidth]{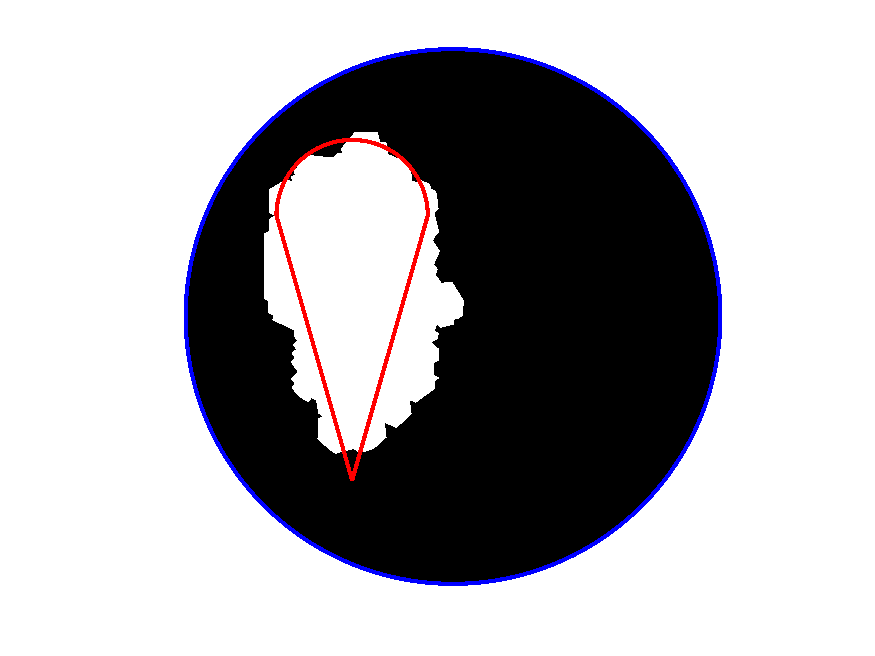}} \quad
    \subfloat[][\emph{Kite}]
    {\includegraphics[width=.3\textwidth]{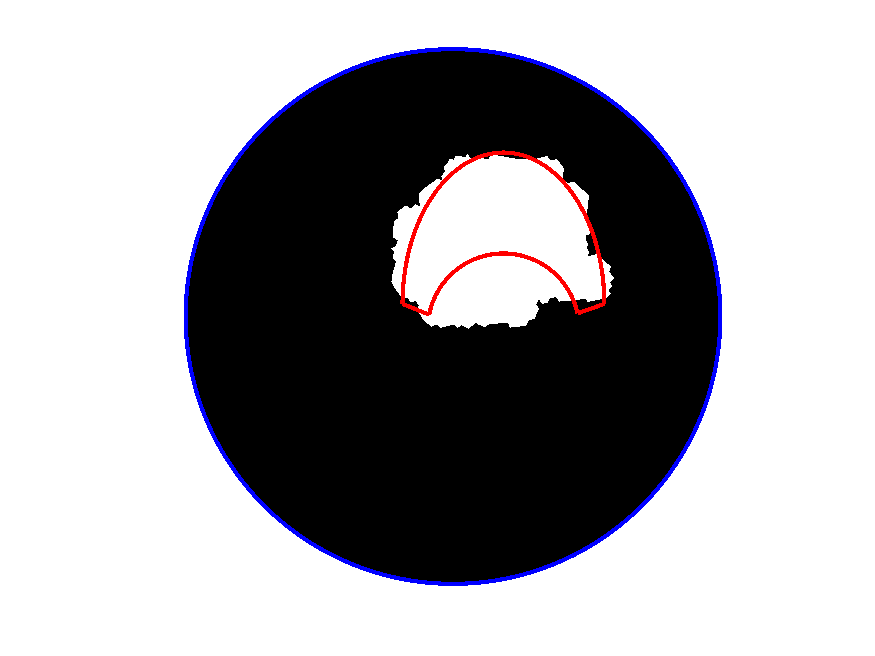}} \quad
    \subfloat[][\emph{Test anomaly 1}]
    {\includegraphics[width=.3\textwidth]{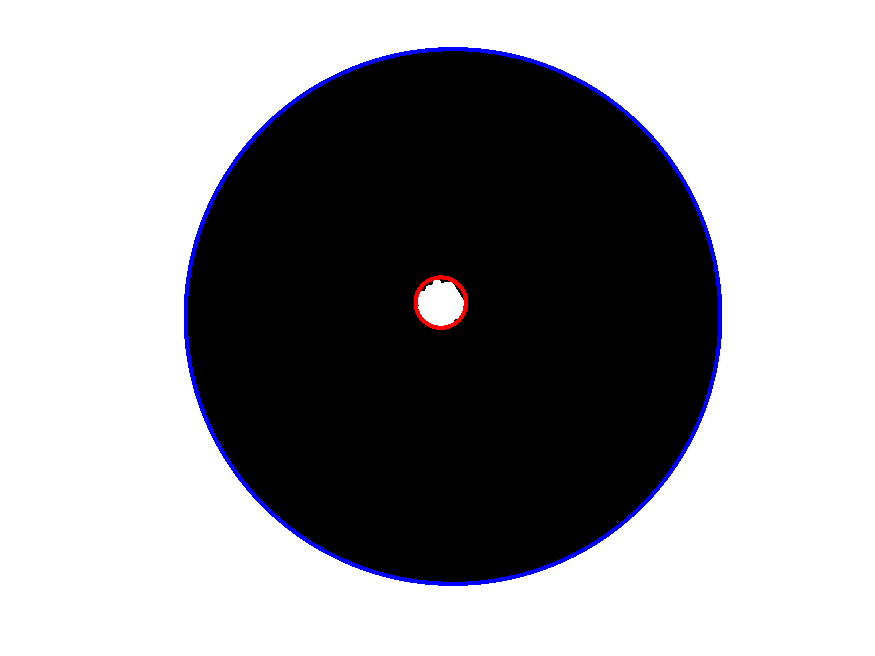}} \\
    \subfloat[][\emph{Test anomaly 2}]
    {\includegraphics[width=.3\textwidth]{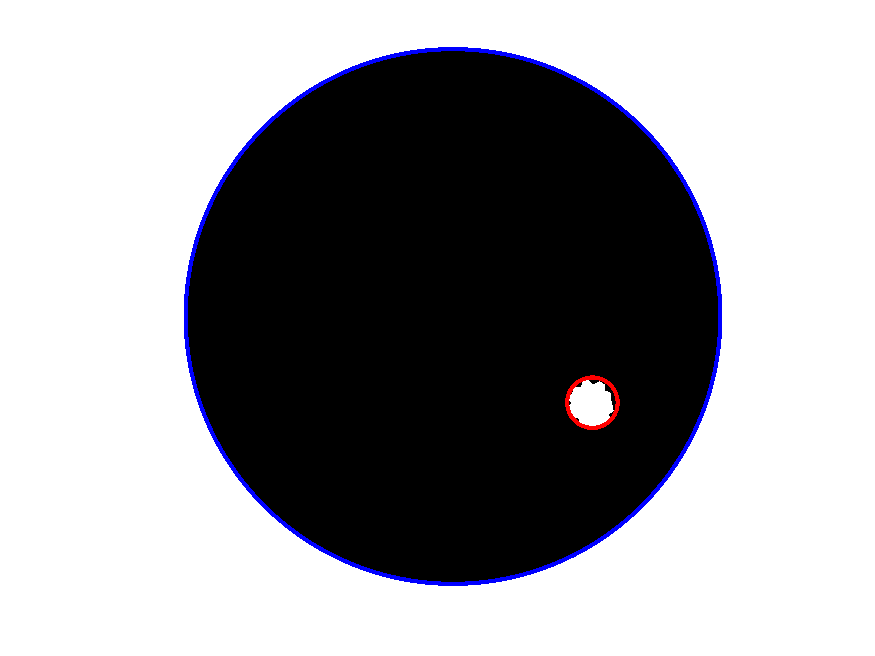}} \quad
    \subfloat[][\emph{Two connected components}]
    {\includegraphics[width=.3\textwidth]{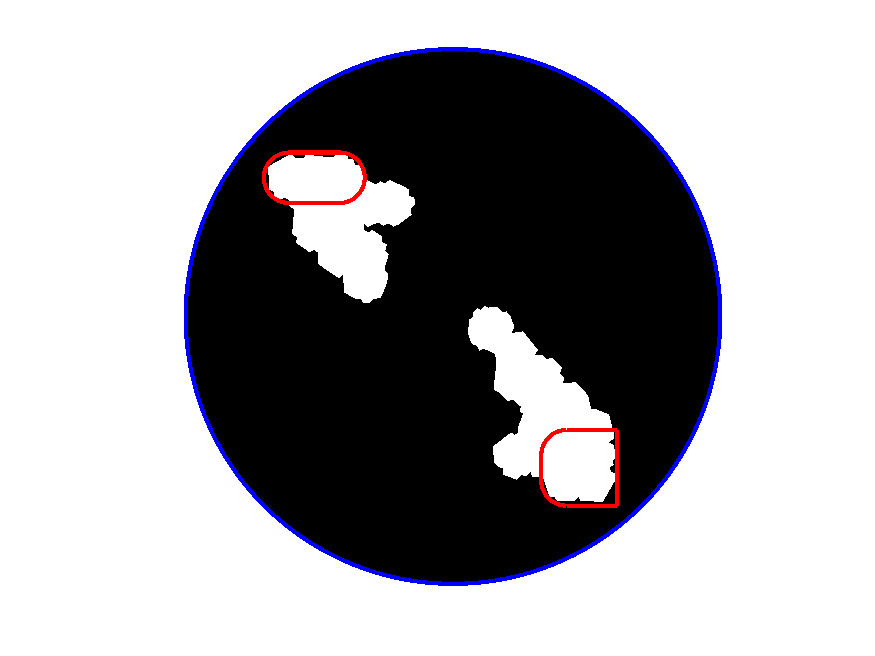}} \quad
    \subfloat[][\emph{Hollow Circle}]
    {\includegraphics[width=.3\textwidth]{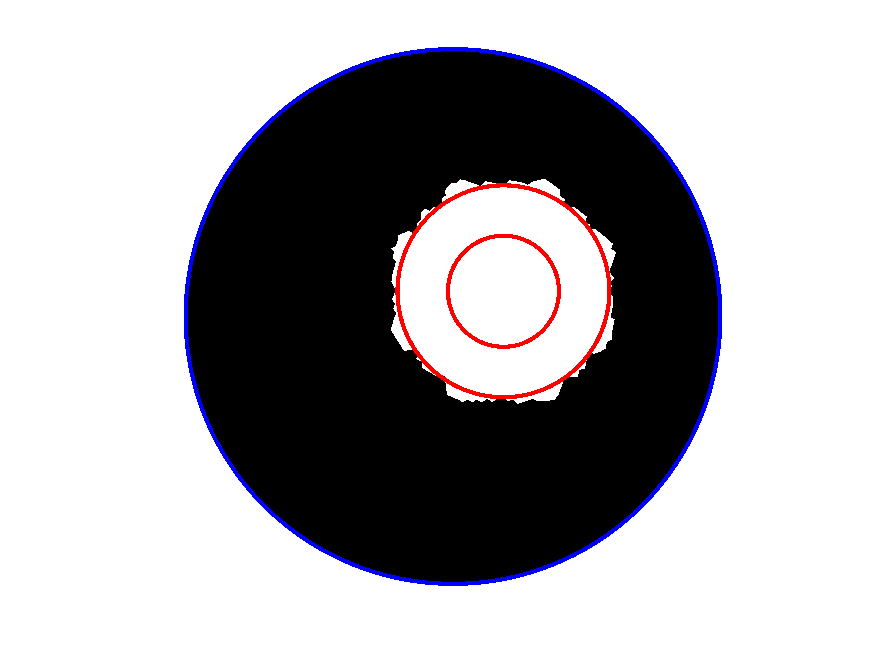}}
    \caption{Reconstructions carried out by the proposed method. The estimated anomaly $\tilde{A}_D^U$ is shown in white while the boundary of the anomaly $A$ is marked in red. \textcolor{black}{The nonlinear phase is given by the mixture of superconductive spherical inclusions in a linear medium. The nonlinear material is surrounded by steel.}}
    \label{fig_13_rec}
\end{figure}

\begin{figure}[htp]
    \centering
    \subfloat[][\emph{Circle}]
    {\includegraphics[width=.3\textwidth]{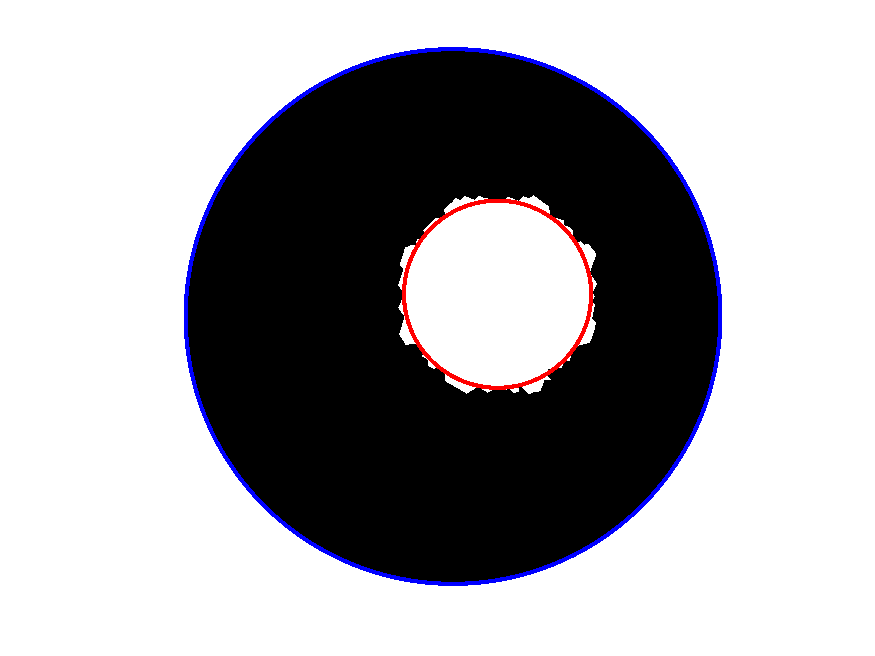}} \quad
    \subfloat[][\emph{Ellipse}]
    {\includegraphics[width=.3\textwidth]{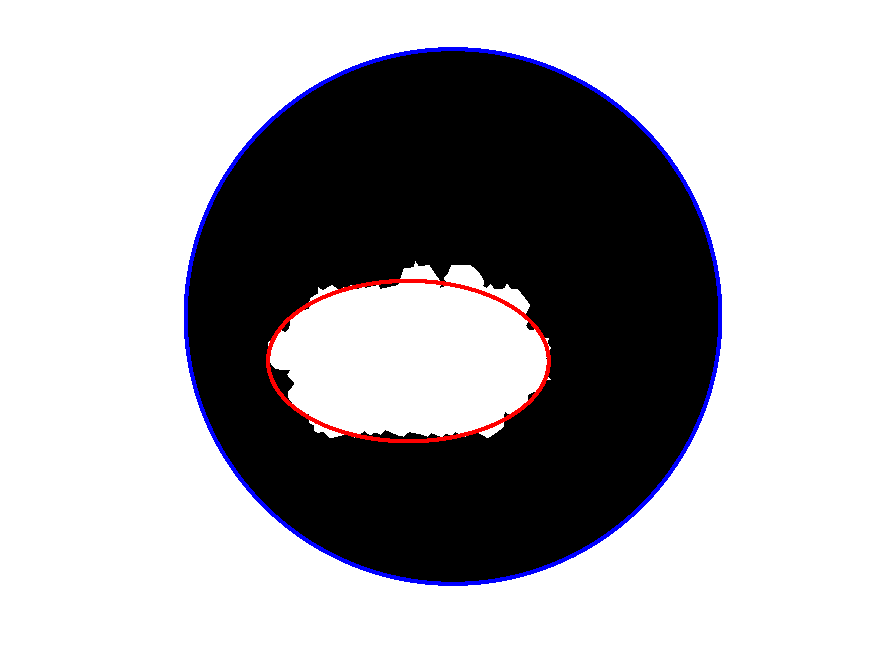}} \quad
    \subfloat[][\emph{Peanut}]
    {\includegraphics[width=.3\textwidth]{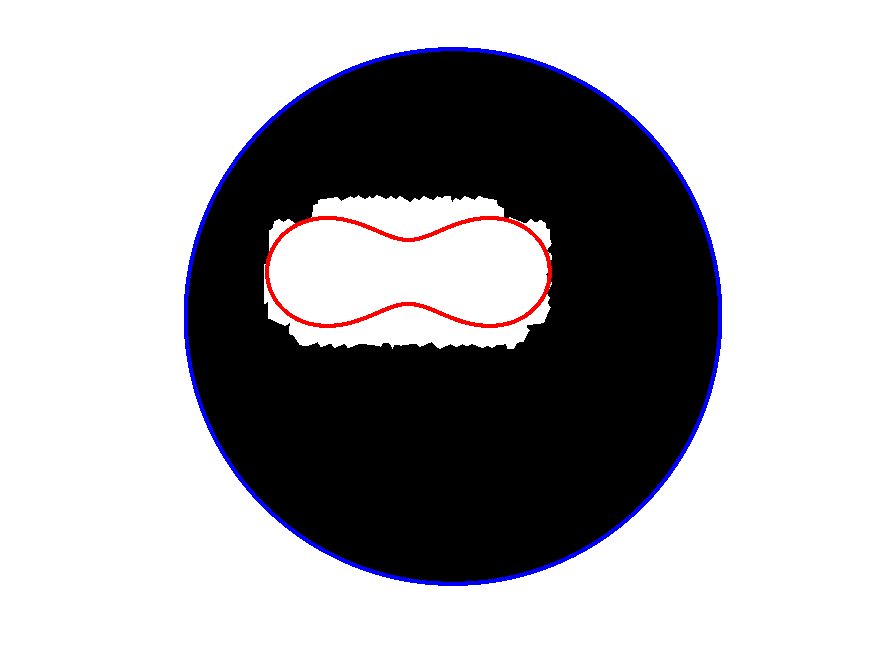}} \\
    \subfloat[][\emph{Droplet}]
    {\includegraphics[width=.3\textwidth]{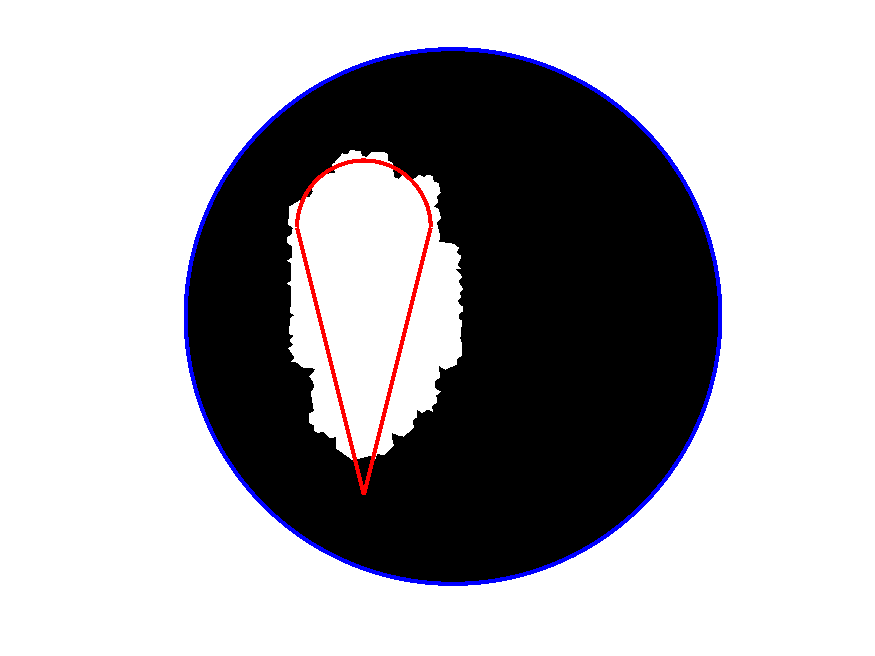}} \quad
    \subfloat[][\emph{Kite}]
    {\includegraphics[width=.3\textwidth]{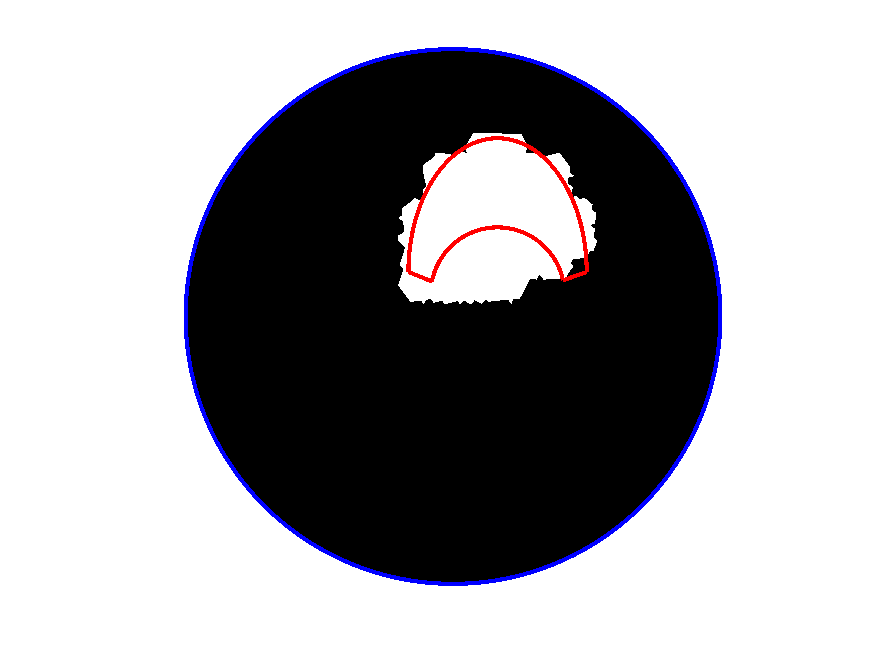}} \quad
    \subfloat[][\emph{Test anomaly 1}]
    {\includegraphics[width=.3\textwidth]{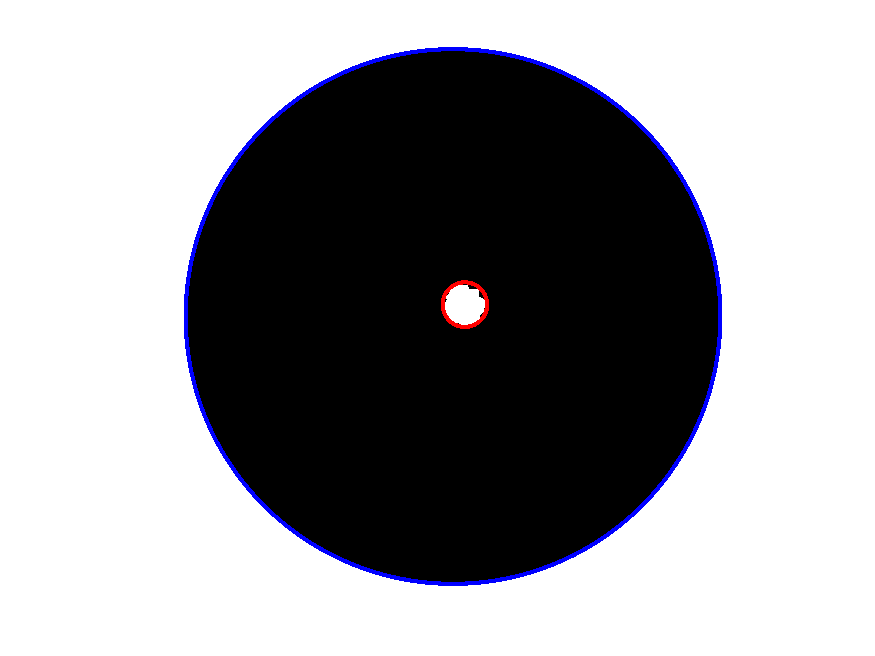}} \\
    \subfloat[][\emph{Test anomaly 2}]
    {\includegraphics[width=.3\textwidth]{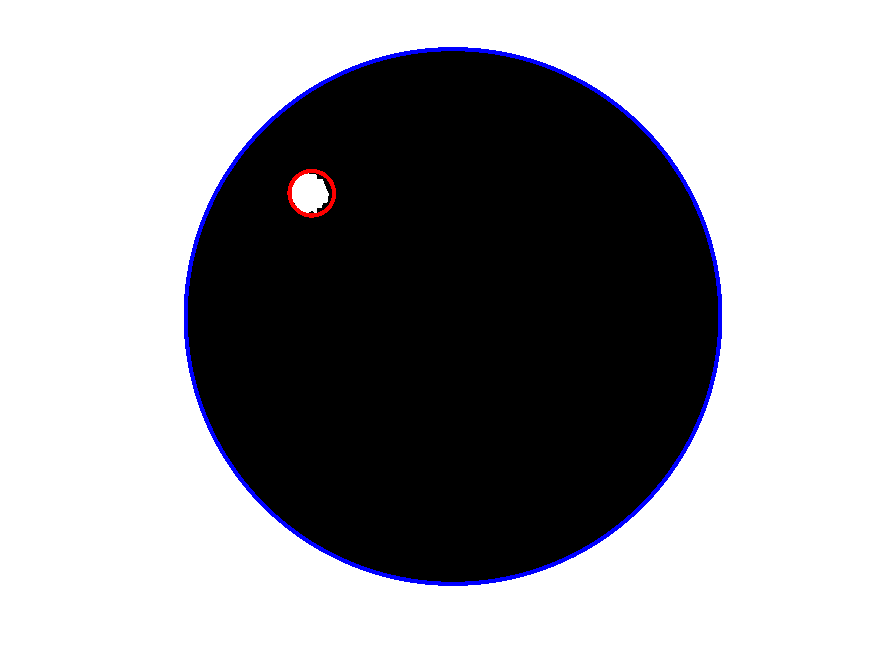}} \quad
    \subfloat[][\emph{Two connected components}]
    {\includegraphics[width=.3\textwidth]{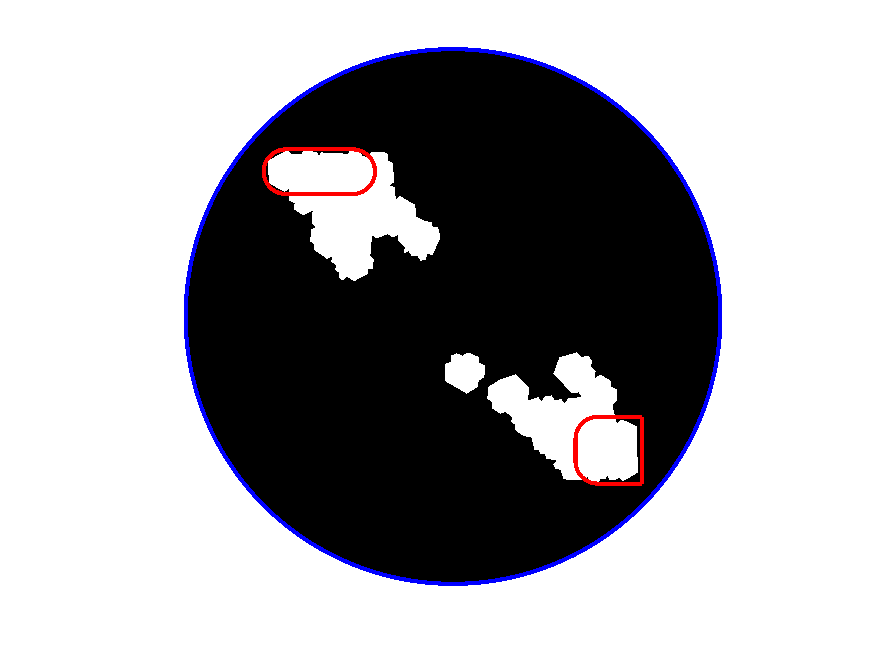}} \quad
    \subfloat[][\emph{Hollow Circle}]
    {\includegraphics[width=.3\textwidth]{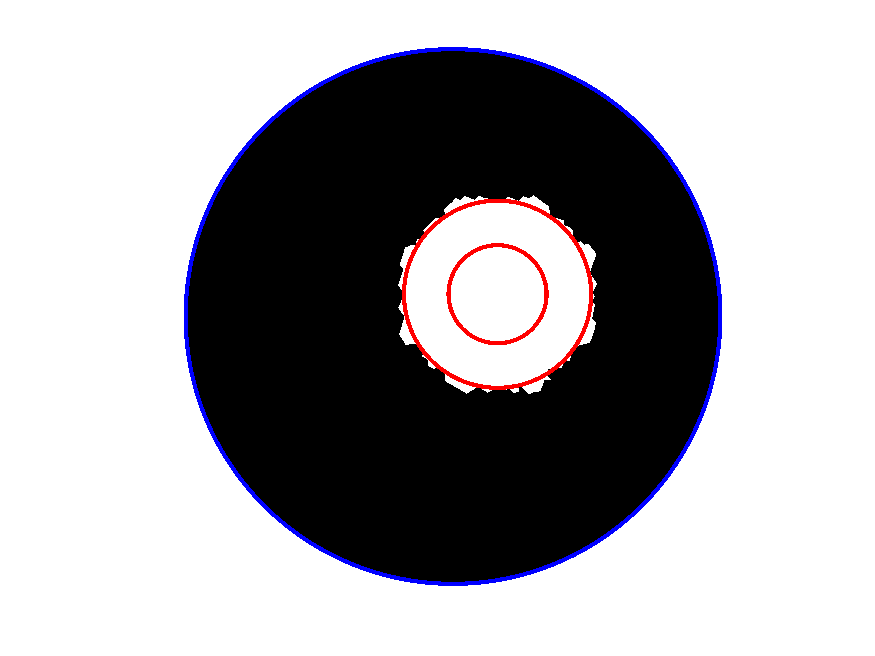}}
    \caption{Reconstructions carried out by the proposed method. The estimated anomaly $\tilde{A}_D^U$ is shown in white while the boundary of the anomaly $A$ is marked in red. \textcolor{black}{The nonlinear phase is given by $M330\mhyphen50A$ Electrical Steel, while $\mu_{bg}=\mu_0$.}}
    \label{fig_14_rec2}
\end{figure}

\begin{table}[htb]
    \centering
    \begin{tabular}{ccc}
        \toprule
        Range & $\eta_1$ & $\eta_2$ \\
        \midrule
        $200\,\text{mV}$ & $3.5 \times 10^{-6}$ & $3.0 \times 10^{-6}$ \\
        $2\,\text{V}$ & $1.2 \times 10^{-6}$ & $0.3 \times 10^{-6}$ \\
        $20\,\text{V}$ & $1.2 \times 10^{-6}$ & $0.1 \times 10^{-6}$ \\
        \bottomrule
    \end{tabular}
    \caption{Values for the noise levels $\eta_1$ and $\eta_2$ adopted in the simulations. The values in this table are those reported in the datasheet of the 2002 8\textonehalf-Digit High Performance Multimeter by Keithley~\cite{web:in}.}
    \label{tab_02_eta}
\end{table}

\textcolor{black}{For the sake of completeness, Figure~\ref{fig_15_eda} shows the distribution of the Dirichlet Energy $\langle \overline{\Lambda}_A(f),f\rangle$ for all the applied test potentials, with reference to the unknown anomaly of Figure~\ref{fig_13_rec}(A) and~\ref{fig_14_rec2}(A). The test potentials, evaluated as in Section~\ref{sec4}, produce value of Dirichlet Energy in a reasonable range, from the experimental point of view. %Furthermore, the Dirichlet Energy varies in a quite short interval, that is desirable in practical applications.
}

\begin{figure}[htp]
    \centering
    \subfloat[][\emph{Steady currents case}]
    {\includegraphics[width=.8\textwidth]{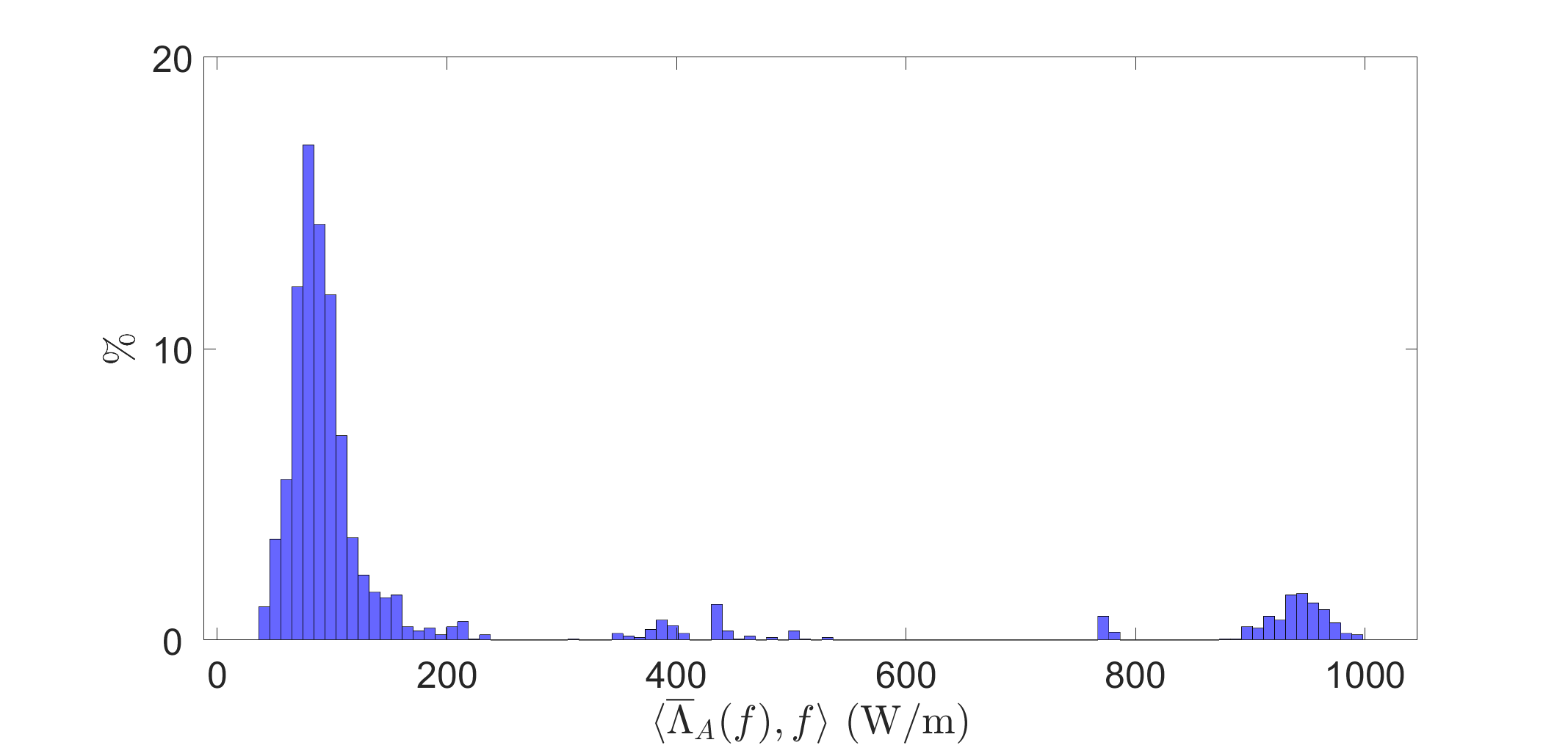}} \\
    \subfloat[][\emph{Magnetostatic case}]
    {\includegraphics[width=.8\textwidth]{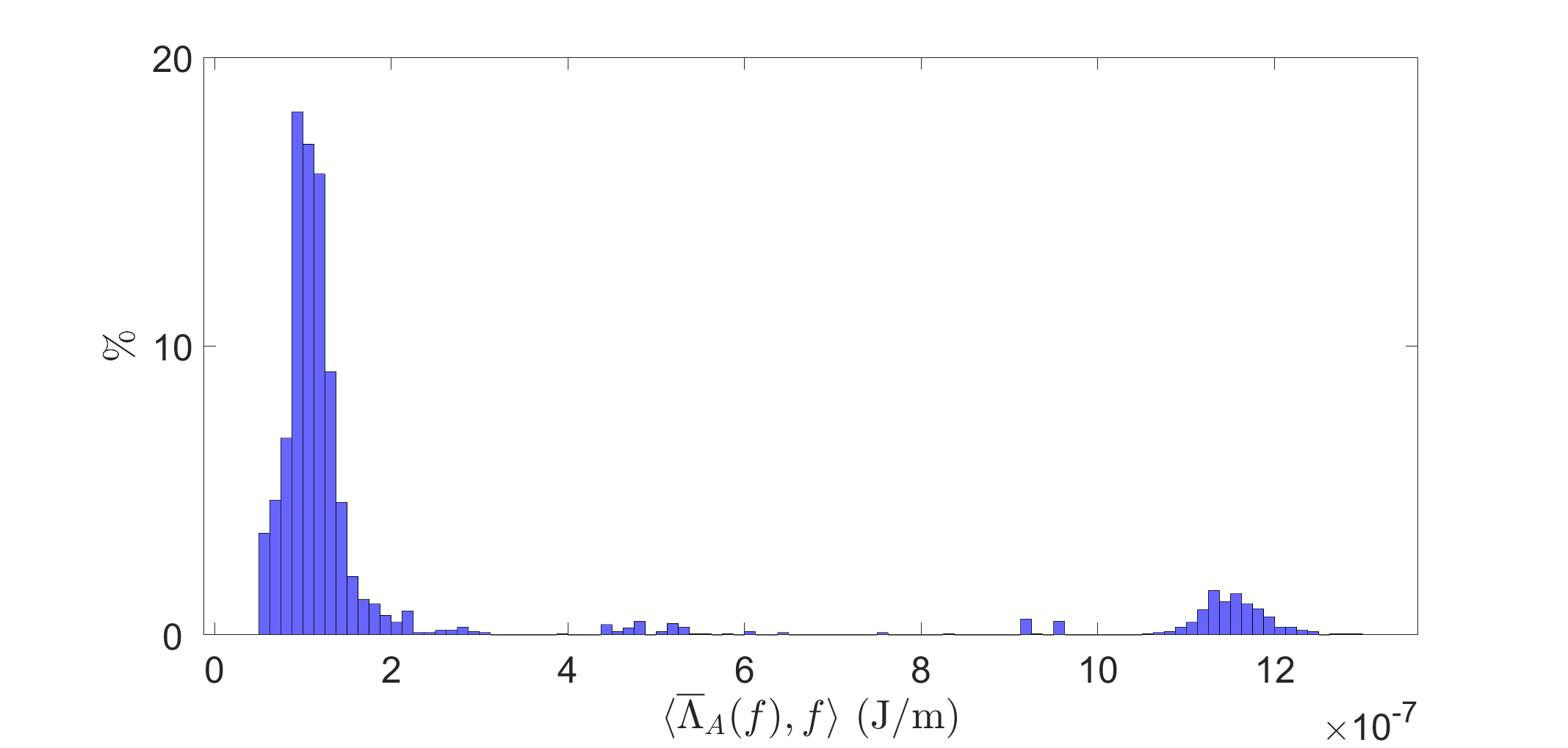}} \\
    \caption{Distribution of the Dirichlet Energy resulting from the application of the test boundary potentials for the configurations of Figure~\ref{fig_13_rec}(A) and ~\ref{fig_14_rec2}(A). A total of 2195 boundary potentials have been applied.}
    \label{fig_15_eda}
\end{figure}

\subsubsection{Steady currents case}
\textcolor{black}{The first series of reconstructions (see Figure~\ref{fig_13_rec}) concerns the steady currents problem. Specifically, we consider a background made of steel, with electrical conductivity $\sigma_{bg}=10\times 10^6\,\text{S/m}$, and a nonlinear phase made of a composite material (see Figure \ref{fig_11_sigmae}).}

\textcolor{black}{The composite material is made of a mixture of superconductive spherical inclusions in a linear medium. The superconducting material is that used for standard commercial superconductive wires. The superconducting spheres are embedded in a linear matrix made of a Ag-Mg alloy, with electrical conductivity equal to $\sigma_1=55.5\times 10^6\,\text{S/m}$ \cite{art:AgMg}. This matrix is typically used as stabilizer in Bi-2212 superconductive wires. The constitutive relationship of the circular superconducting inclusions is $E(J)=E_0\left({J}/{J_c}\right)^n$ (\emph{E-J Power Law}, see \cite{rhyner1993magnetic})},
\textcolor{black}{which leads to the nonlinear electrical conductivity}
\begin{equation*}
    \textcolor{black}{
        \sigma_2(E)=\frac{J_c}{E_0}\left(\frac{E}{E_0}\right)^{\frac{1-n}{n}},
    }
\end{equation*}
\textcolor{black}{where $E_0$ is equal to $1\times 10^{-4}\,\text{V/m}$, $J_c=8000\,\text{A/mm$^2$}$ and $n=27$ for the material of Bi-2212 superconductive wires \cite{art:bi2212}.}

\textcolor{black}{The effective electrical conductivity of the mixture $\sigma_e(\cdot)$ is evaluated via the Bruggeman formula \cite{art:brug} as a solution of}
\begin{equation}\label{eqn:brug}
    \textcolor{black}{
        \delta _{1}\,{\frac{\sigma _{1}-\sigma _{e}(E)}{\sigma _{1}+2\sigma _{e}(E)}}+\delta _{2}\,{\frac{\sigma _{2}(E)-\sigma _{e}(E)}{\sigma _{2}(E)+2\sigma _{e}(E)}}\,=\,0,
    }
\end{equation}
\textcolor{black}{where $\delta_i$ is the volume fraction of the $i-$th component. For this specific mixture $\delta_1=0.668$ and $\delta_2=0.332$. The resulting effective electrical conductivity is shown in Figure~\ref{fig_11_sigmae}.}

\subsubsection{Magnetostatic case}
\textcolor{black}{The second series of reconstructions (see Figure~\ref{fig_14_rec2}) is inspired by a security application. Indeed, as} pointed out in Section~\ref{sec1}, an interesting field for Magnetostatic Permeability Tomography applications is the detection of magnetic materials in boxes and containers for surveillance and security reasons. Driven by this motivation, the following shows some examples whose aim is to reconstruct the shape, position and dimension of an anomaly made by a ferromagnetic material in air. In particular, the anomaly is characterized by the nonlinear magnetic permeability reported in Figure~\ref{fig_12_perm2} and $\mu_{bg}=\mu_0$.

\subsubsection{Discussion of the numerical results}
\textcolor{black}{Figure~\ref{fig_13_rec} and~\ref{fig_14_rec2} show similar performances of the proposed method in the two different physical scenarios. Furthermore, although the overall quality of the reconstructions is good, it is possible to highlight some peculiar behaviours: excellent performances on convex anomalies (A,B,F,G), tendency to convexification (C,D,E), possibility to resolve multiple inclusions (H), impossibility to correctly reconstruct anomalies with cavities (I).}

\section{Conclusions}\label{sec13}
This work is focused on the problem of tomography governed by elliptic PDEs in the presence of nonlinear materials. This is an inverse problem where the aim is to find the nonlinear and spatially varying coefficient of an elliptic PDE.

Based on the recent development of the Monotonocity Principle for nonlinear elliptical PDEs, this contribution proposes a tomographic method for the inverse obstacle problem of nonlinear anomalies embedded in a linear background. This is the first time that a method for the tomography of nonlinear materials is presented.

One of the main challenges in dealing with nonlinear problems, lies in selecting a proper boundary potential capable of revealing the presence of anomalies, if any.
% checking for the existence of a boundary data $f \in X_{\diamond}(\partial\Omega)$ such that
% \begin{equation}
% \label{eqn:concond}
% \langle \overline{\Lambda}_{T}(f),f\rangle> \langle \overline{\Lambda}_{A}(f),f\rangle,
% \end{equation}
% when $T \nsubseteq A$.
% The existence of such boundary data when $T \nsubseteq A$, is fundamental to reveal that the test anomaly $T$ is, indeed, not contained in $A$.
A major result (see Section \ref{sec4}) is that such boundary data can be found by solving a linear problem, despite the nonlinear nature of the original problem. Moreover, these boundary data can be pre-computed and stored once for all. This key result is fundamental in making the imaging method suitable for real-time tomography, which is a rare feature in the field of inverse problems.

Finally, numerical examples in the framework of Electrical Resistance Tomography and Magnetic Permeability Tomography have demonstrated the excellent performance achieved by the method.

\begin{comment}
\begin{figure}[htp]
    \centering
    \subfloat[][\emph{Circle}]
    {\includegraphics[width=.3\textwidth]{Histograms/Fig16_1.eps}} \quad
    \subfloat[][\emph{Ellipse}]
    {\includegraphics[width=.3\textwidth]{Histograms/Fig16_2.eps}} \quad
    \subfloat[][\emph{Peanut}]
    {\includegraphics[width=.3\textwidth]{Histograms/Fig16_3.eps}} \\
    \subfloat[][\emph{Droplet}]
    {\includegraphics[width=.3\textwidth]{Histograms/Fig16_4.eps}} \quad
    \subfloat[][\emph{Kite}]
    {\includegraphics[width=.3\textwidth]{Histograms/Fig16_5.eps}} \quad
    \subfloat[][\emph{Test anomaly 1}]
    {\includegraphics[width=.3\textwidth]{Histograms/Fig16_6.eps}} \\
    \subfloat[][\emph{Test anomaly 2}]
    {\includegraphics[width=.3\textwidth]{Histograms/Fig16_7.eps}} \quad
    \subfloat[][\emph{Two connected components}]
    {\includegraphics[width=.3\textwidth]{Histograms/Fig16_8.eps}} \quad
    \subfloat[][\emph{Hollow Circle}]
    {\includegraphics[width=.3\textwidth]{Histograms/Fig16_9.eps}}
    \caption{Test for histograms}
    \label{fig:hists}
\end{figure}
    
\end{comment}

\bibliographystyle
%{plain}
%{iopart-num}
%{siam}
%{unsrt}
{ieeetr}
\bibliography{biblioCFPPT}
\end{document}